\newtheorem{theorem}{Theorem}[section]
\newtheorem{proposition}[theorem]{Proposition}
\newtheorem{satz}[theorem]{}
\theoremstyle{definition}
\newtheorem{dixmier}[theorem]{}
\theoremstyle{definition}
\newtheorem{definition}[theorem]{Definition}
\newcommand{\restrict}{\,{\mathbin{\vert\mkern-0.3mu\grave{}}}\,}
\newcommand{\luk}{\L u\-ka\-s\-ie\-wicz}
\newcommand{\remove}[1]{}
\DeclareMathOperator{\Mod}{\rm Mod}
\DeclareMathOperator{\flip}{\mathfrak f}
 \title[3-valued logic on the $n$-cube]
{Logic on the  $n$-cube}
\author{Daniele Mundici}
\address[D. Mundici]{Department of
Mathematics  ``Ulisse Dini'' \\
University of Florence\\
Viale Morgagni 67/A \\
I-50134 Florence \\
Italy}
\email{ mundici@math.unifi.it }
\date{\today}
\begin{document}

\thanks{2000 {\it Mathematics Subject Classification.}
Primary: 03G20    Secondary: 06D25, 06D35,
03B50, 03B53}
\keywords{Rota-Metropolis cubic logic, logics admitting inconsistency,
\luk\ logic, Post algebras of order 3, De Luca-Termini sharpening order}

\begin{abstract}
We endow the partially ordered
set   of nonempty faces of the
$n$-cube  with a distinguished 0-dimensional face
and three   operations  that naturally extend
 the Rota-Metropolis partial operations.
While the structures thus obtained  turn out to be
 term-equivalent to 
Post algebras of order 3, the inclusion
 order between faces  coincides with the
 De Luca-Termini sharpening order, and yields
 a compact  coNP-complete   logic that
 tolerates a modicum of inconsistency
 and nonmonotonicity.
\end{abstract}

\maketitle

\hfill {\it to Arnon Avron}

\bigskip

\section{Introduction: order and algebra on the faces the $n$-cube}
\label{section:intro}
For all   $n \geq 5$ the only two possible order structures
arising from the faces of regular (convex) polyhedra in euclidean
$n$-space  are those obtained from the $n$-cube and
the $n$-simplex,  \cite{cox}, \cite[p.190]{som}.

The  lattice of all faces of the $(n-1)$-simplex
 ($n=1,2,\ldots$)  can be identified with the powerset of
$\{1,\ldots,n\}$, i.e.,
with  the boolean algebra
  $\mathcal B_n$ with $2^n$ elements.  
  
For an analogous treatment of the set ${\mathcal F}_n$
of {\it nonempty }  faces of the $n$-cube, 
in   \cite{rotmet},  Rota and
Metropolis endow   
 ${\mathcal F}_n$ 
  with an operation $\sqcup$  and
  two partial operations $\sqcap,\bigtriangleup$
  as follows:
\begin{itemize}
\item[(i)]  the smallest face
$A \sqcup B$
containing  the faces $A$ and $B$,
 
\item[(ii)]  the intersection
$A \sqcap B$ of any two intersecting 
faces $A$ and $B$, 
 
\item[(iii)] the ``antipodal''
$\bigtriangleup(B,A)$ of $A$ in $B$   whenever 
$A\subseteq B$. 
The  vertices of 
$\bigtriangleup(B,A)$
are  symmetric to the vertices of $A$
 with respect to the center of $B$.
\end{itemize}
 
To give a three-valued logical interpretation of 
$\mathcal F_n$,  
Rota and Metropolis consider the set  
of all pairs $A = (A_0, A_1)$ of disjoint
subsets of $\{1,2,\ldots,n\}$, with the
understanding that  
 $A_0$ (resp., $A_1$)
is the set of coordinates where all points of   
the face  $A$  of the $n$-cube
constantly have value 0 (resp.,  value 1). 
The  operation $\sqcup$ is   given by
%
%
$(A_0,A_1) \sqcup (B_0,B_1) = (A_0 \cap B_0, A_1 \cap B_1).$
%
%
The partial operation $\sqcap$ is defined whenever
$A_0 \cap B_1 = \emptyset = A_1 \cap B_0$, by 
%
%
$(A_0,A_1) \sqcap (B_0,B_1) = (A_0 \cup B_0, A_1 \cup B_1).$
%
%
The partial operation $\bigtriangleup$ is defined whenever
$A_0 \supseteq B_0$ and $A_1 \supseteq B_1,$
by  
%
%
$\bigtriangleup((B_0,B_1), (A_0,A_1) ) =
(B_0 \cup (A_1 \setminus B_1), B_1 \cup (A_0 \setminus B_0)).$
In \cite[p.694]{rotmet}  Rota and Metropolis
write:

\smallskip
\begin{quote}
{\small
Each face $A = (A_0, A_1)$ of the $n$-cube is the result
of sampling a population $S = \{1,\ldots,n\}$, with a
view of testing the validity of a yes-no hypothesis. Here
$A_1$  and $A_0$ are the subsets of $S$ where the hypothesis
does or does not hold, respectively.
A third truth-value ``not-yet-known" can be assigned to each
element in  $S \setminus (A_0 \cup A_1)$.
Two results $A$ and $B$ of this sampling are said to be
incompatible if the two faces $A$ and  $B$  are disjoint.
}
\end{quote}

Using this representation of
$\mathcal F_n$ 
and writing for every function
$f \colon \{1,\ldots,n\}\to \{0,1/2,1\}$, 
$
 \iota(f)=(A_0,A_1)=(f^{-1}(0),f^{-1}(1)),
$
it follows that $\iota$ is a one-one correspondence
(actually, an isomorphism)
  between the set $\{0,1/2,1\}^{n}$ of such functions
  and  $\mathcal F_n$. 
We will identify via $\iota$ the two sets  $\mathcal F_n$
and  $\{0,1/2,1\}^{n}$. 
Following Rota and Metropolis,
by  a  {\it (finite)  cubic algebra} we mean  a partial 
structure  $C= (C, \sqcup, \sqcap, \bigtriangleup)$
which for some   integer 
$n \geq 1$  is isomorphic to 
$({\mathcal F}_n, \sqcup, \sqcap, \bigtriangleup)$.

 To give a logical interpretation 
  of  the    operations
 $\sqcup, \sqcap$  and $\bigtriangleup$, in 
 Section \ref{section:Post} we modify the definition of
cubic algebra  by  extending
  $\bigtriangleup(x,y)$ to the total operation
$\partial(x,y)=\bigtriangleup(x\sqcup y,y)$.
Using our identification $\mathcal F_n=\{0,1/2,1\}^n,$
we further equip every cube  with two distinguished
faces   $1/2$ and 0, where
  $1/2$ denotes  the cube itself, while  $0$
  (i.e., the constant zero function),
  denotes a distinguished 0-dimensional face  of the cube, 
 called {\it origin}.
We finally replace the partial operation $\sqcap$ by the
everywhere defined operation
$
x\wedge y=(0\sqcup x)\sqcap(0\sqcup y)\sqcap (x\sqcup y).
$
By definition, a
  {\it   Rota-Metropolis algebra,} (for short,
  {\it RM-algebra}) is  a 
structure 
 with two distinguished elements  $1/2$ and $0$ and three
everywhere defined binary operations  $\sqcup,\partial,\wedge$,
satisfying all equations satisfied by the 1-cube 
$({\mathcal F}_1, 0, 1/2, \sqcup,\partial,\wedge)$.

In Theorem \ref{theorem:post=cubic} we show that
all   operations of
  RM-algebras are definable in terms of
the  operations of   Post algebras of order 3---and
vice versa.  It follows that
the two categories of RM-algebras and
Post algebras of order 3 are 
 equivalent.
As noted in Theorem
 \ref{theorem:deluca}, 
 the inclusion order between faces, 
 when interpreted in Post  algebras, coincides
with the  De Luca-Termini ``sharpening'' or
``enhancing'' order (see \cite{del} and references therein). 

In Section \ref{section:logic}
we introduce a   
 consequence relation $\models_\Diamond $  that stands to
 the natural inclusion order between faces of cubes as the usual
 consequence relation  in Post  logic stands
 to the  natural order of Post algebras.  
We  show that the resulting logic is compact, and  
the problem $\alpha \models_\Diamond \beta$ is
coNP-complete. 
In sharp contrast with Post logic, $\models_\Diamond $
is (moderately)  inconsistency tolerant and  non-monotonic.

\section{Post algebras of order 3}
 \label{section:Post}
For background in universal algebra
  we refer the reader to
  \cite{mck}.

\begin{definition}
\label{definition:kleene} 
(\cite[\S 1]{cig})
A {\it Kleene algebra} 
is  a distributive lattice   
$$(A, 0, 1, \neg , \vee, \wedge)$$
   with smallest element  0  and largest element  1  such that  $\,\,\neg\neg x= x, \,\, \neg(x\vee y) = \neg x \wedge \neg y,\,\,$  and $\,\,x\wedge \neg x \leq  y\vee \neg y.$
\end{definition}

There are many equivalent definitions of  Post algebra of order 3
(see,e.g., \cite{baldwi, boi, tor}).  
In this paper we will adopt the following:

\begin{definition}
\label{definition:post}
(\cite[Definition 1.1]{dicpet}) 
  A {\it Post algebra of order 3} is an algebra  
  $$A = (A, 0, 1/2, 1,\neg , \nabla, \vee, \wedge)$$
    such that  $(A, 0, 1, \neg , \vee, \wedge)$  is a Kleene algebra, 
    $1/2=\neg1/2,$ and for all  
 $\,x\in A, \,\,\neg x\wedge \nabla x = \neg x\wedge  x$
 and $\neg x \vee \nabla x = 1.$
 \end{definition}
 
As noted  in \cite[p.242]{cig},
every Kleene algebra satisfies
the equation
$
\nabla(x\wedge y)=\nabla x\wedge \nabla y,
$
whence condition (iii) in \cite[Definition 1.1]{dicpet}
is redundant.

Post algebras of order 3 are 
also known as ``centered 
3-valued \L ukasiewicz algebras''.
Throughout this paper,    
{\it Post algebra} will mean ``Post algebra of order 3''.

\medskip

\noindent
{\it Example.}
Let  $\mathfrak Z$  denote the set $\{0,1/2,1\}$.
Equipping  $\mathfrak Z$ with the operations
\begin{equation}
\label{equation:post-operations}
\neg x=1-x,  \quad
\nabla x=  
 \min(1,2x),\quad
x\vee y =\max(x,y),\quad 
x\wedge y =  \min(x,y),
\end{equation}
we obtain the Post algebra  
$\mathfrak Z_{\rm Post}=
(\mathfrak Z,0,1/2,1,\neg,\nabla,\vee,\wedge).$
%
 
 %

%

\begin{theorem}
\label{theorem:post-saga}  Adopt the above notation
and terminology:
\begin{itemize}
\item[(i)]
An algebra
 $Q
  =(Q ,0,1/2, 1,\neg,\nabla,\vee,\wedge)$
   is a Post algebra  iff it satisfies all equations
satisfied by  $\mathfrak Z_{\rm Post}$
iff  it  belongs to the equational class
  $HSP(\mathfrak Z_{\rm Post})$  generated by
$\mathfrak Z_{\rm Post}$. 

\smallskip
\item[(ii)]
Fix a cardinal  $\kappa>0$. Then the
  free Post algebra on $\kappa$ generators is the
 set  $\mathfrak Z^{\mathfrak Z^\kappa}$ 
 of functions from the Tychonov cube
 $\mathfrak Z^\kappa$ to
 $\mathfrak Z,$ obtainable from the
  constant
 functions $0, 1/2, 1 $ and 
  the coordinate functions
 $(x_1,\ldots,x_\alpha,\ldots)\mapsto x_\alpha,\,\,\,($for 
 each ordinal 
 $\alpha$ with $0\leq \alpha<\kappa)$
 by pointwise application of the  operations
 of $\mathfrak Z_{\rm Post}$.
   
 \smallskip
\item[(iii)]
Up to isomorphism, every Post algebra   
   is the algebra of all continuous $\mathfrak Z$-valued
functions over some  totally disconnected compact
Hausdorff space, with the pointwise operations of
$\mathfrak Z_{\rm Post}$.
\end{itemize}
\end{theorem}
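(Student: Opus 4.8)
The plan is to reduce all three parts to the single structural fact that $\mathfrak{Z}_{\rm Post}$ is \emph{primal}, i.e.\ functionally complete with no proper subalgebras and no nontrivial automorphisms. Once this is in hand, parts (i)--(iii) follow from the classical theory of primal algebras (Foster, Hu, Pixley; see \cite{mck}), so I would open with the primality lemma and treat it as the genuinely new content of the argument.

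To prove primality I would first exhibit, as term operations, the three value-indicators $\chi_0,\chi_{1/2},\chi_1\colon\mathfrak{Z}\to\{0,1\}$. Setting $\Delta x:=\neg\nabla\neg x$, a direct evaluation gives $\chi_1=\Delta x$, $\chi_0=\neg\nabla x$, and $\chi_{1/2}=\nabla x\wedge\neg\Delta x$, each equal to the characteristic function of the indicated value. For an arbitrary $f\colon\mathfrak{Z}^k\to\mathfrak{Z}$ I would then write the disjunctive normal form
\[
 f(x_1,\ldots,x_k)=\bigvee_{p\in\mathfrak{Z}^k}\Big(f(p)\wedge\bigwedge_{i=1}^{k}\chi_{p_i}(x_i)\Big),
\]
where each value $f(p)\in\{0,1/2,1\}$ is supplied by the corresponding constant; evaluating at a point $q$ collapses the join to $f(q)$, so $f$ is a term operation and every finitary operation on $\mathfrak{Z}$ is a term. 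Since $0,1/2,1$ are nullary operations, $\mathfrak{Z}$ has no proper subalgebra, and any automorphism fixes these constants and hence is the identity; thus $\mathfrak{Z}_{\rm Post}$ is primal.

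With primality established, part (i) follows from Birkhoff's theorems together with the fact that a primal algebra is simple and generates a variety whose only subdirectly irreducible member is the algebra itself; the one remaining point, that the \emph{only} subdirectly irreducible Post algebra is $\mathfrak{Z}_{\rm Post}$, I would obtain from the structure theory of Post algebras of order $3$ via their boolean skeleton, or simply cite it (\cite{dicpet,boi,tor}). Part (ii) is then the standard description of free algebras as term functions: the free algebra on $\kappa$ generators in $HSP(\mathfrak{Z}_{\rm Post})$ is the subalgebra of $\mathfrak{Z}^{\mathfrak{Z}^\kappa}$ generated by the projections and constants, and primality identifies these with all finitely supported --- equivalently, for the discrete topology on $\mathfrak{Z}$, all continuous --- functions $\mathfrak{Z}^\kappa\to\mathfrak{Z}$.

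Finally, part (iii) is Hu's duality theorem for primal algebras. The dual space is $X=\mathrm{Hom}(A,\mathfrak{Z}_{\rm Post})$, topologized as a closed subspace of the product $\mathfrak{Z}^{A}$ of discrete copies of $\mathfrak{Z}$, whence $X$ is compact, Hausdorff and totally disconnected; the evaluation map $a\mapsto\hat a$, with $\hat a(h)=h(a)$, embeds $A$ into $C(X,\mathfrak{Z})$. The main obstacle is the surjectivity of this embedding --- that the point-separating, constant-containing subalgebra $\{\hat a\}$ exhausts \emph{all} continuous $\mathfrak{Z}$-valued functions. This is exactly the Stone--Weierstrass step for discrete-valued functions, and it is precisely where primality is used: the indicator terms above realize any clopen-based function locally, and compactness of $X$ upgrades local agreement to a single global term, yielding the isomorphism $A\cong C(X,\mathfrak{Z})$.
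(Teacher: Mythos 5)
Your proposal is correct, but it takes a genuinely different route from the paper, whose entire proof is a citation: (i)--(ii) to Birkhoff's HSP theorem plus \cite[Corollary 4, p.203]{baldwi}, and (iii) to \cite[Theorem 5, p.198]{baldwi} or \cite[1.6]{dicpet}. You instead make the primality of $\mathfrak Z_{\rm Post}$ the central lemma and derive everything from the Foster--Hu--Pixley theory of primal varieties. Your verification of primality is sound: $\Delta x=\neg\nabla\neg x=\max(0,2x-1)$, $\neg\nabla x$, and $\nabla x\wedge\neg\Delta x$ are indeed the indicators of $1$, $0$, $1/2$ respectively, and your disjunctive normal form collapses correctly at each point because the top element $1$ is a named constant and absorbs under $\wedge$ while $0$ is neutral for $\vee$; with all three elements named as constants there are no proper subalgebras or nontrivial automorphisms. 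You also correctly isolate the one step that primality cannot deliver, namely that the axioms of Definition \ref{definition:post} are an adequate axiomatization of $HSP(\mathfrak Z_{\rm Post})$ (equivalently, that every subdirectly irreducible Post algebra is $\mathfrak Z_{\rm Post}$), and you cite this rather than prove it --- which puts you on the same footing as the paper, since that is essentially what \cite[Corollary 4, p.203]{baldwi} supplies. What your approach buys is a self-contained and reusable argument: the same primality lemma immediately yields the term-equivalence results of Section \ref{section:RM}, the Stone-type representation (iii) via Hu's duality (your Stone--Weierstrass step, using the indicator terms plus compactness, is the standard and correct way to get surjectivity onto $C(X,\mathfrak Z)$), and the identification of the free algebra with the continuous functions on $\mathfrak Z^\kappa$, which the bare statement of (ii) leaves slightly implicit. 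What it costs is reliance on the general machinery of primal algebras where the paper leans on results specific to the Post/\L ukasiewicz literature.
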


\begin{proof}
(i) and (ii)  follow from   Birkhoff theorem  \cite[4.131]{mck},
   together with   \cite[Corollary 4, p.203]{baldwi}.
For (iii) see   
\cite[Theorem 5, p.198]{baldwi}, or \cite[1.6]{dicpet}.
\end{proof}

 \medskip
The following  binary operations on $\mathfrak Z$ will 
be frequently used in this paper
(the values of $x$ are listed in the leftmost column,
those of $y$ in the top row):

\bigskip

\begin{center}
\begin{tabular}{|c|c|c|c|}
\hline
$x\sqcup y$&0&$1/2$&1\\
\hline
0&0&$1/2$&$1/2$\\
\hline
$1/2$&$1/2$&$1/2$&$1/2$\\
\hline
1&$1/2$&$1/2$&1\\
\hline
\end{tabular}
\quad\quad\quad\quad
\begin{tabular}{|c|c|c|c|}
\hline
$\partial(x,y)$&$0$&$1/2$&$1$\\
\hline
$0$&0&$1/2$&0\\
\hline
$1/2$&1&$1/2$&0\\
\hline
$1$&1&$1/2$&1\\
\hline
\end{tabular}
\end{center}

 \bigskip

\medskip

\begin{proposition} 
\label{proposition:term-eq}
With reference to  (\ref{equation:post-operations})  we have

\begin{itemize}
\item[(i)]  The operations $\neg, \nabla,  \vee$
  are definable in $\mathfrak Z$ from 
$0,1/2,\wedge,\partial$,
and so is the operation 
\begin{equation}
\label{equation:Delta}
\Delta x=\max(0,2x-1).
\end{equation}

\bigskip
\item[(ii)]   The binary operation
   $\sqcup\colon \mathfrak Z^2\to \mathfrak Z$ 
  is  definable from $0,1/2,\wedge,\partial$
  as follows:
  \begin{equation}
  \label{equation:sqcup}
x\sqcup y=(\neg\nabla y\wedge \nabla y\wedge 1/2)
\vee(\Delta y\wedge(1/2\vee\Delta x))\vee \partial(0,y).
 \end{equation}

\bigskip
\item[(iii)]   The binary operation
 $\wedge\colon \mathfrak Z^2\to \mathfrak Z$ 
  is not  definable from $0,1/2,\sqcup,\partial$.

\bigskip
  \item[(iv)] 
   The  algebras
 $\mathfrak Z_{\rm Post}
=(\mathfrak Z,0,1/2,1,\neg,\nabla,\vee,\wedge)$  and
$\mathfrak Z_{\rm RM}=
(\mathfrak Z,0,1/2, \sqcup,\partial,\wedge)$ are
term-equivalent.
In detail,  for all $x,y\in\mathfrak Z$
we have:
%
\begin{equation}
\label{partial-from-post}
\partial(x,y)
= \left(1/2 \wedge\nabla y\wedge \nabla\neg y\right)
\vee  (\Delta x\wedge\Delta y)
\vee
 (\nabla x\wedge \Delta\neg y),
\end{equation}
with  $\sqcup$  given 
by (\ref{equation:sqcup}).
Vice versa,
\begin{equation}
\label{post-from-cubic}
1=\partial\left({1}/{2},0\right),
\,\, \neg x=\partial\left({1}/{2},x\right),
\,\,\nabla x= \partial(x,0),\,
\,\,
  x\vee y=\neg(\neg x\wedge \neg y).
\end{equation}

\end{itemize}
\end{proposition}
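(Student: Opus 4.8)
The whole proposition lives on the three–element set $\mathfrak Z=\{0,1/2,1\}$, so every asserted identity is a finite check and the only genuine content is the clone bookkeeping. The plan is to run everything through the Boolean \emph{pair representation} $\mu\colon x\mapsto(\nabla x,\Delta x)$, which embeds $\mathfrak Z$ into $\{0,1\}^2$ with image $\{(0,0),(1,0),(1,1)\}$; its inverse ``assembly'' map is $\mu^{-1}(c,d)=d\vee(c\wedge 1/2)$. Reading the two tables one records the auxiliary reductions $1=\partial(1/2,0)$, $\nabla x=\partial(x,0)$, $\Delta x=\partial(x,1)$, $\neg x=\partial(1/2,x)$, and checks that under $\mu$ the primitives become coordinatewise Boolean operations: $\wedge\leftrightarrow(\wedge,\wedge)$ and $\sqcup\leftrightarrow(\vee,\wedge)$. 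These facts drive all four parts.

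For (i) and (iv) I would read $\neg$, $\nabla$ and the constant $1$ straight off \eqref{post-from-cubic}, take $\Delta x=\partial(x,\partial(1/2,0))$ from the $\partial$–table, and obtain $\vee$ from the Kleene De Morgan law $x\vee y=\neg(\neg x\wedge\neg y)$; this settles (i) using only $0,1/2,\wedge,\partial$. For the term–equivalence (iv) I would verify \eqref{partial-from-post} by evaluating its right–hand side at the nine pairs, exhibiting $\partial$ as a Post term; together with $\wedge$ being a Post primitive and $\sqcup$ being a Post term by (ii), this gives that every RM operation is Post–definable, while \eqref{post-from-cubic} with $1=\partial(1/2,0)$ gives the converse, so the two clones coincide.

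Part (ii) is then transparent via the representation: since $\sqcup$ is $(\vee,\wedge)$ on the pair $(\nabla,\Delta)$, reassembling yields $x\sqcup y=(\Delta x\wedge\Delta y)\vee\bigl((\nabla x\vee\nabla y)\wedge 1/2\bigr)$, a term over $0,1/2,\wedge,\partial$ which one confirms on the nine pairs. (The displayed formula \eqref{equation:sqcup} should be double–checked, since with $\nabla y\in\{0,1\}$ its first disjunct $\neg\nabla y\wedge\nabla y\wedge 1/2$ is identically $0$ and the surviving two disjuncts do not reproduce the table at $y=0$; this looks like a draft typo rather than a gap in the claim.)

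The interesting case is (iii), a non–definability statement, for which the natural tool is the Pol--Inv correspondence: find a relation preserved by $0,1/2,\sqcup,\partial$ but not by $\wedge$. I would first observe that binary relations are useless, because any subalgebra of $\mathfrak Z^2$ containing $(0,0),(1/2,1/2)$ already contains the full diagonal (apply $\partial$), and a one–line closure computation shows that adjoining any off–diagonal point generates all of $\mathfrak Z^2$; thus the only invariant binary relations are the diagonal and the full square, both of which $\wedge$ preserves, so a separating relation must have arity $\ge 3$. The main obstacle, however, is that this search cannot succeed as stated: because the reduct defines both roundings $\nabla=\partial(\,\cdot\,,0)$ and $\Delta=\partial(\,\cdot\,,1)$, and because on sharp arguments $a\wedge b=\Delta(a\sqcup b)$, the representation $x\wedge y=\mu^{-1}(\nabla x\wedge\nabla y,\ \Delta x\wedge\Delta y)$ unwinds to the explicit term $x\wedge y=\Delta(\nabla x\sqcup\nabla y)\sqcup\Delta(x\sqcup y)$, which one verifies agrees with $\min$ at all nine arguments. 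Hence $\wedge$ \emph{is} definable from $0,1/2,\sqcup,\partial$, and (iii) as printed appears to fail. The clean irredundancy result that does hold, and which I expect is the intended statement, is that the antipodal $\partial$ cannot be dropped: $\partial$ is not definable from $0,1/2,\sqcup,\wedge$, since all four of these operations are isotone for the order $0<1/2<1$ whereas $\partial(0,-)$ maps $0,1/2,1$ to $0,1/2,0$ and is not isotone, so $\partial\notin\operatorname{Pol}({\le})\supseteq\operatorname{Clo}(0,1/2,\sqcup,\wedge)$. Fixing the exact signature is the crux here; once it is fixed, the proof is the short monotonicity (or relational) separation just described.
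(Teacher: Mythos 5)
Your treatment of (i) and (iv) is correct and is essentially the paper's: both reduce to a finite check of the displayed identities on the nine pairs, and your pair representation $x\mapsto(\nabla x,\Delta x)$ is only a bookkeeping device for organizing that check. You are also right that (\ref{equation:sqcup}) as printed cannot be correct: its first disjunct $\neg\nabla y\wedge\nabla y\wedge 1/2$ is identically $0$ because $\nabla y\in\{0,1\}$, and at $(x,y)=(1,0)$ the remaining two disjuncts also give $0$ while $1\sqcup 0=1/2$; your replacement $(\Delta x\wedge\Delta y)\vee\bigl((\nabla x\vee\nabla y)\wedge 1/2\bigr)$ does agree with $\sqcup$ at all nine arguments, so the intended content of (ii) survives with a corrected term.

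The substantive issue is (iii), and here your objection is sound: the statement as printed is false, and the flaw lies in the paper's proof rather than in your proposal. I checked your witness directly. Put $\nabla x=\partial(x,0)$ and $\Delta x=\partial(x,\partial(1/2,0))$, both terms over $0,1/2,\partial$. Then $\Delta(\nabla x\sqcup\nabla y)$ equals $1$ exactly when $\nabla x=\nabla y=1$ and is $0$ otherwise, $\Delta(x\sqcup y)$ equals $1$ exactly when $x=y=1$ and is $0$ otherwise, and applying $\sqcup$ to these two boolean values returns $0$, $1/2$ or $1$ according as neither, exactly one, or both equal $1$; running through the nine pairs this is precisely $\min(x,y)$. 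Hence $\wedge$ is definable from $0,1/2,\sqcup,\partial$. The paper's argument for (iii) only excludes terms of the two special shapes $f(x)\sqcup g(y)$ and $\partial(f(x),g(y))$ with $f,g$ unary; it never addresses terms in which both variables occur inside a single argument of the outermost connective, which is exactly the shape of your term. Your proposed repair --- that $\partial$ is not definable from $0,1/2,\sqcup,\wedge$, since those four operations are monotone for the order $0<1/2<1$ while $\partial(0,\cdot)$ takes the values $0,1/2,0$ --- is correct and would restore a true irredundancy statement for the signature.
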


\medskip
\begin{proof}  
(i)  It is easy to verify that
$\neg x=\partial(1/2,x)$, $\nabla x=\partial(x,0),$
$x\vee y=\neg(\neg x\wedge \neg y)$
 and $\Delta x=\neg\nabla\neg x.$

\medskip
(ii) is proved by a tedious but straightforward
verification using (i). 


\medskip
  (iii) By way of contradiction, let us suppose that
 $\wedge$ is definable.
 
 \smallskip
 \noindent{\it Case 1:}  $x\wedge y=f(x)\sqcup g(y)$ for suitable
functions $f,g\colon \mathfrak Z\to \mathfrak Z.$

Then $f(0)=g(0)=0$ and
$f(1)=g(1)=1$
  whence  $f(1)\sqcup g(0)=1/2$.
  On the other hand,  
$1 \wedge 0 = 0 \not=f(1)\sqcup g(0),$ a contradiction. 
 
 \smallskip
\noindent{\it Case 2:}  $x\wedge y=\partial(f(x),g(y))$ for suitable
functions $f,g\colon \mathfrak Z\to \mathfrak Z.$

If  $g(1)\in\{0,1\}$ then $\partial(f(1/2), g(1))\in \{0,1\}$
while $1/2\wedge 1=1/2$, a contradiction showing
that  $g(1)=1/2$.  It follows that $\partial(f(1),g(1))=1/2
\not=1\wedge 1.$
Thus 
$x \wedge y\not=\partial(f(x),g(y))$,
 another contradiction.
 
 \medskip
 (iv)  follows from
a straightforward computation.
\end{proof}


\medskip

\remove{
\begin{satz}
All unary functions  $f\colon \mathfrak Z\to \mathfrak Z$ are
obtainable from the identity function $x$, once the
  set $\mathfrak Z$ is equipped with 
the constants $0,1/2$  and the operations $\sqcup,\partial$. 
\end{satz}

\begin{proof}
We will use the following notation:
\begin{equation}
\label{equation:further-notation}
1=\partial(1/2,0),\,\,\,\vec{x}=
\partial(x,0)=\min(1,2x).
\end{equation}
Direct inspection shows that the six functions
$\neg\flip(\neg x),\,\,\,\flip(\neg x),\,\,\,\neg x,\,\,\, x,\,\,\,
\flip(x),\,\,\,\neg\flip(x) $
yield all unary one-one functions over $\mathfrak Z$. To complete
the proof  it  suffices to show  that all {\it monotonic}
functions are definable.  There are 10 such functions: in increasing 
order they are respectively given by
$
\partial(1/2,1),\,\,\, \flip\partial(x,1),\,\,\,\partial(x,\partial x,0),\,\,\,
\flip(\vec{x}), \,\,\,x,$
$\,\,\,\vec{x}, \,\,\,1/2, \,\,\,\flip\neg\flip\partial(x,1),\,\,\,
\flip\neg\flip\partial(x,0),\,\,\, \partial(1/2,0).
$
\end{proof}
}

\section{RM-algebras}
 \label{section:RM}
  Algebras in the equational class  
 $HSP(\mathfrak Z_{\rm RM})$ generated
 by $\mathfrak Z_{\rm RM}$
 are called  {\it RM-algebras.}

\begin{theorem}[Equivalent categories]
\label{theorem:post=cubic}  
With the above  stipulations we have:

\begin{itemize} 
 \item[(i)]  An algebra
$R=(R,0,1/2,\sqcup,\partial,\wedge)$
 is an RM-algebra iff it satisfies all equations
satisfied by $\mathfrak Z_{\rm RM}$. 

\smallskip
  \item[(ii)]
There is a finite set $\mathcal E$ of equations involving
the constants  $0,1/2$ and the operations 
$\sqcup,\partial, \wedge$ such that
 RM-algebras can be redefined as those
 algebras  satisfying all equations
in  $\mathcal E$. 
 
 \smallskip
  \item[(iii)] For any RM-algebra
  $B =(B,0,1/2, \sqcup,\partial,\wedge)$ let 
  $$B' 
 = (B,0,1/2,1,\neg,\nabla,\vee,\wedge)$$   be the
 algebra obtained by defining  
  $\neg,\nabla,\vee,\wedge$, and
 $1=\partial(1/2,0)$ as in 
 (\ref{post-from-cubic}).
  Then  $B'$ is a Post algebra.
  Conversely, for every Post algebra 
 $A=(A,0,1/2,1,\neg,\nabla,\vee,\wedge)$ let 
 $$A'
 =(A,0,1/2, \sqcup,\partial,\wedge)$$
  be the
 algebra obtained by defining  
 the operations $\partial,\wedge,\sqcup$ as in 
Proposition 
\ref{proposition:term-eq}(iv).
Then  $A'$ is an RM-algebra.

\smallskip
  \item[(iv)]
    The two categories of Post algebras and
   RM-algebras are  
   equivalent, and they are also equivalent to the category
   of boolean algebras.   
   \end{itemize}
 \end{theorem}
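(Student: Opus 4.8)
The plan is to treat the four parts in turn, with the single engine throughout being the term-equivalence of $\mathfrak Z_{\rm Post}$ and $\mathfrak Z_{\rm RM}$ recorded in Proposition \ref{proposition:term-eq}(iv). Part (i) I would dispatch immediately from the Birkhoff HSP theorem \cite[4.131]{mck}, exactly as in Theorem \ref{theorem:post-saga}(i): since $HSP(\mathfrak Z_{\rm RM})=\Mod(\Th(\mathfrak Z_{\rm RM}))$, an algebra is an RM-algebra precisely when it satisfies every equation valid in $\mathfrak Z_{\rm RM}$. To organise the remaining parts I would introduce two syntactic term-translations: let $\Phi$ send each Post-signature term to the RM-signature term obtained by replacing $1,\neg,\nabla,\vee$ according to (\ref{post-from-cubic}) and leaving $0,1/2,\wedge$ fixed, and let $\Psi$ be the reverse translation replacing $\sqcup,\partial$ according to (\ref{equation:sqcup}) and (\ref{partial-from-post}). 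Proposition \ref{proposition:term-eq}(iv) says exactly that these translations are \emph{correct} on $\mathfrak Z$: for every Post-operation $g$ the term $\Phi(g)$ induces $g$ on $\mathfrak Z$, and for every RM-operation $f$ the term $\Psi(f)$ induces $f$; hence the round-trip terms $\Phi(\Psi(f))$ and $\Psi(\Phi(g))$ induce $f$ and $g$, so that $\mathfrak Z_{\rm RM}\models f=\Phi(\Psi(f))$ and $\mathfrak Z_{\rm Post}\models g=\Psi(\Phi(g))$.

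Part (iii) is the structural heart and I would prove it first. Given an RM-algebra $B$, the algebra $B'$ of the statement is precisely $\Phi$ applied to $B$, its new operations being the $\Phi$-terms evaluated in $B$; consequently, for any terms $s,t$, one has $B'\models s=t$ iff $B\models\Phi(s)=\Phi(t)$. Now take any equation $s=t$ valid in $\mathfrak Z_{\rm Post}$: correctness of $\Phi$ gives $\mathfrak Z_{\rm RM}\models\Phi(s)=\Phi(t)$, whence $B\models\Phi(s)=\Phi(t)$ by (i), and therefore $B'\models s=t$. Thus $B'$ satisfies every equation valid in $\mathfrak Z_{\rm Post}$ and so, by Theorem \ref{theorem:post-saga}(i), is a Post algebra. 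The converse direction is symmetric, with $\Psi$ in place of $\Phi$ and (i) for $\mathfrak Z_{\rm RM}$: if $A$ is a Post algebra then $A'=\Psi(A)$ satisfies every equation of $\mathfrak Z_{\rm RM}$ and is an RM-algebra.

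For (ii) I would take the finite axiom set $\Sigma$ of Definition \ref{definition:post} (the bounded distributive-lattice axioms, the Kleene identities, $1/2=\neg 1/2$, and the two $\nabla$-identities) and set $\mathcal E:=\Phi(\Sigma)$ together with the two round-trip identities $x\sqcup y=\Phi(\Psi(\sqcup))(x,y)$ and $\partial(x,y)=\Phi(\Psi(\partial))(x,y)$; all of these hold in $\mathfrak Z_{\rm RM}$, so every RM-algebra satisfies $\mathcal E$. Conversely, if $R\models\mathcal E$ then $\Phi(R)$ satisfies $\Sigma$, hence is a Post algebra, so $\Psi(\Phi(R))$ is an RM-algebra by (iii); the round-trip identities force $\Psi(\Phi(R))=R$, so $R$ is an RM-algebra. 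For (iv) the same bookkeeping yields the equivalence with Post algebras: the assignments $B\mapsto\Phi(B)$ and $A\mapsto\Psi(A)$ are, by (iii), maps between the two classes, and the round-trip identities (valid throughout both varieties) give $\Psi(\Phi(B))=B$ and $\Phi(\Psi(A))=A$, so they are mutually inverse bijections on objects; since homomorphisms preserve all term operations and each signature is term-definable from the other, a map is an RM-homomorphism iff it is a Post-homomorphism of the corresponding images, so the two categories are in fact isomorphic, hence equivalent.

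It remains to link this to boolean algebras, and here I would invoke the representation of Theorem \ref{theorem:post-saga}(iii): each Post algebra is $C(X,\mathfrak Z)$ for a boolean space $X$ unique up to homeomorphism, and the functor sending it to the boolean algebra $C(X,\{0,1\})$ of clopen subsets of $X$---equivalently, to its boolean skeleton $\{x:\nabla x=x\}$---is an equivalence onto boolean algebras, with quasi-inverse $X\mapsto C(X,\mathfrak Z)$; this is the classical order-$3$ case, for which I would cite \cite{baldwi,dicpet}. I expect the main obstacle to be precisely this last equivalence, since it is the only ingredient that leaves pure equational manipulation and rests on the Stone-type representation and duality; by contrast the RM--Post half reduces entirely to Proposition \ref{proposition:term-eq}(iv), where the only delicate point is the correctness and mutual-inverse behaviour of $\Phi$ and $\Psi$, as it is the round-trip identities that simultaneously make the object maps bijective and keep the axiom set $\mathcal E$ finite.
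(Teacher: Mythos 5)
Your proof is correct and follows essentially the same route as the paper: Birkhoff's HSP theorem for (i), translation of the finite Post axiomatization for (ii), the term-equivalence of Proposition \ref{proposition:term-eq}(iv) for (iii), and the classical Post--boolean equivalence from Balbes--Dwinger for the last part of (iv). The only substantive difference is that you spell out the two syntactic translations $\Phi,\Psi$ and observe that the round-trip identities for $\sqcup$ and $\partial$ must be added to $\mathcal E$ in (ii) --- a detail the paper's one-line proof leaves implicit but which is genuinely needed, since $\Phi(\Sigma)$ alone does not constrain $\sqcup$ or general instances of $\partial$.
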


\begin{proof} 
(i) From  Birkhoff theorem \cite[4.131]{mck}.

(ii)  We can effectively write down
$\mathcal E$ starting from the  defining equations
of Post algebras (of order 3) as given
by Definition \ref{definition:post}, and  translating them into
equations for RM-algebras using  Proposition
\ref{proposition:term-eq}(iv). 
 
(iii) Follows  from Proposition
\ref{proposition:term-eq}(iv) using,
if necessary, \cite[4.140]{mck}. 

The first statement
   of (iv) follows from (iii). For the rest,
see  \cite[Theorem 8(ii), p.202]{baldwi}.
 \end{proof}


  \begin{theorem}[Representation of RM-algebras]
  \label{theorem:cubic-representation}  
  Let $A
  =(A ,0,1/2, \sqcup,\partial,\wedge)$ be an RM-algebra.
  \begin{itemize}
  
  \item[(a)]
Up to isomorphism, 
$A$ is the algebra of all continuous $\mathfrak Z$-valued
functions over some  totally disconnected compact
Hausdorff space, with the pointwise operations of
the RM-algebra 
$\mathfrak Z_{\rm RM}=(\mathfrak Z,0,1/2, \sqcup,\partial,\wedge)
=\mathcal F_1$.

\medskip 
 \item[(b)]
If  $A$ is  finite  
then  for some  $n=1,2,\ldots, $
 $A$ has $3^n$ elements, and is isomorphic to the
RM-algebra $\mathcal F_n$  of nonempty faces of the
 $n$-cube   equipped with the distinguished
 constants $0,1/2$  and operations  $\sqcup,\partial,\wedge$
 as follows:
 \begin{itemize}
 \item[(i)] $0$ is the origin, i.e., the constant function $0$;
 \item[(ii)] $1/2$ is the   cube itself, i.e.,
 the constant function $1/2$;
 \item[(iii)] $x\sqcup y$  is the smallest face containing $x$ and $y$;
 \item[(iv)] $\partial(x,y)=\bigtriangleup(x \sqcup y,y)$ is the antipodal
face of $y$ in $x\sqcup y$;
 \item[(v)] 
 $
 x\wedge y
 $
 is the intersection of the three faces
 $\,\,0\sqcup x,\,\, 0\sqcup y,\,\,\, x\sqcup y.$
 Thus, with the notation of (ii) in the Introduction, 
\begin{equation}
\label{equation:wedge}
  x\wedge y=(0\sqcup x)\sqcap(0\sqcup y)
  \sqcap (x\sqcup y).
\end{equation}
 \end{itemize}
 
   \item[(c)]   For every cardinal $\kappa>0$  the
  free RM-algebra on $\kappa$ generators
 is the set $\mathfrak Z^{\mathfrak Z^\kappa}$ 
 of all continuous  $\mathfrak Z$-valued functions
 over the Tychonov cube  $\mathfrak Z^\kappa$
equipped with the  
 constant functions $0,1/2$  and the pointwise
  operations  $\sqcup,\partial,\wedge$ of  $\mathfrak Z_{\rm RM}$
 and with the coordinate functions $(x_0,\ldots,x_\alpha,\ldots)
 \mapsto x_\alpha,$
 for each ordinal $\alpha$  with 
$0\leq\alpha<\kappa)$ as free generators.

 \end{itemize}
 \end{theorem}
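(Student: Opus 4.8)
The plan is to derive all three parts as formal consequences of the term-equivalence between RM-algebras and Post algebras (Theorem \ref{theorem:post=cubic}), transporting to the RM-setting the representation facts for Post algebras collected in Theorem \ref{theorem:post-saga}. The decisive point is that term-equivalence means the two signatures are interdefinable by \emph{fixed} terms, as spelled out in Proposition \ref{proposition:term-eq}(iv): consequently any map that is an isomorphism of Post algebras on the underlying sets is simultaneously an isomorphism of the associated RM-algebras, and a pointwise interpretation of the Post operations is, term by term, a pointwise interpretation of the $\mathfrak Z_{\rm RM}$-operations.

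For (a), I would pass from the given RM-algebra $A$ to its associated Post algebra $A'$ via (\ref{post-from-cubic}), as in Theorem \ref{theorem:post=cubic}(iii). By Theorem \ref{theorem:post-saga}(iii), $A'$ is isomorphic to the algebra of all continuous $\mathfrak Z$-valued functions on some totally disconnected compact Hausdorff space $X$, with the pointwise Post operations. Since $\mathfrak Z$ is finite and discrete, the operations of $\mathfrak Z_{\rm RM}$ are automatically continuous, so this function algebra is closed under the pointwise $\sqcup,\partial,\wedge$; and because these are precisely the Post-terms exhibited in Proposition \ref{proposition:term-eq}(iv), the same isomorphism transports the RM-structure of $A$ onto the pointwise $\mathfrak Z_{\rm RM}$-structure, giving (a).

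For (b), assuming $A$ finite, the space $X$ produced in (a) carries only finitely many continuous $\mathfrak Z$-valued functions, forcing $X$ to be a finite totally disconnected compact Hausdorff space and hence finite discrete, say of cardinality $n$; thus $A\cong\mathfrak Z^n$ has $3^n$ elements. Under the identification $\iota$ of the Introduction one has $\mathfrak Z^n=\{0,1/2,1\}^n=\mathcal F_n$, so the remaining task is to check that the pointwise operations coincide with the geometric face operations (iii)--(v). This is the only genuine verification, and it is a finite reading-off rather than a deep obstacle: the table for $\sqcup$ shows that the zero-set and one-set of $f\sqcup g$ are $f^{-1}(0)\cap g^{-1}(0)$ and $f^{-1}(1)\cap g^{-1}(1)$, which is the Rota-Metropolis join and hence the smallest face containing $f$ and $g$; item (iv) holds by the defining relation $\partial(x,y)=\bigtriangleup(x\sqcup y,y)$; and item (v) is exactly the defining equation (\ref{equation:wedge}) for $\wedge$.

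For (c), I would invoke the uniqueness of free objects together with the term-equivalence: the free RM-algebra on $\kappa$ generators corresponds to the free Post algebra on $\kappa$ generators, which by Theorem \ref{theorem:post-saga}(ii) is $\mathfrak Z^{\mathfrak Z^\kappa}$ with the coordinate functions as free generators. Transporting the operations through the fixed interdefining terms exactly as in (a) endows this same set with the pointwise $\mathfrak Z_{\rm RM}$-operations, yielding the stated description with the coordinate functions again serving as free generators.
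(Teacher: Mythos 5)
Your proposal is correct and follows essentially the same route as the paper: part (a) by combining the representation of Post algebras (Theorem \ref{theorem:post-saga}(iii)) with the term-equivalence of Theorem \ref{theorem:post=cubic}(iii), part (b) by a direct finite verification of the face operations, and part (c) by transporting the free Post algebra of Theorem \ref{theorem:post-saga}(ii) through the same term-equivalence. The only difference is that you spell out the transport-of-structure and finiteness arguments that the paper leaves implicit (citing only \cite[4.140]{mck} and calling the rest ``immediate'' or ``a tedious but straightforward computation'').
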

 
 \begin{proof} (a) Combine Theorems
 \ref{theorem:post-saga}(iii) and
 \ref{theorem:post=cubic}(iii) with
 \cite[4.140]{mck}.
 
 (b)    (i)--(iv) are immediate.
 Then a tedious but straightforward computation
 yields (\ref{equation:wedge}).
 

 (c)   From Theorems
  \ref{theorem:post-saga}(ii) and \ref{theorem:post=cubic}(iii).  
 \end{proof}
 
 As a particular case of (iv) in the above theorem,  
 $\partial(x,0)$  is 
 the vertex  of $x$ farthest from the origin, where
   the  distance of a vertex
$\,v\,$ from the origin is 
 the number of edges in 
a shortest path leading from $0$ to $v$.

 \section{The natural inclusion order between faces}
 The relationships between the lattice operation $\wedge$  and
 the Rota-Metropolis partial operation $\sqcap$
 are deeper than what is shown 
 in (\ref{equation:wedge}).
 To see this, proceeding as in 
Theorem  \ref{theorem:post=cubic}(iii),
  we first equip every RM-algebra
 $A=(A,0,1/2,\sqcup,\partial,\wedge)$
  with the derived constant 1 and
 operations  $\neg,\nabla,\vee$ as follows:
\begin{equation}
\label{equation:superimpose-post}
1=\partial\left(1/2,0\right),\quad
 \neg x=\partial\left(1/2,x\right),
\quad\nabla x= \partial(x,0),\,
\quad
x\vee y=\neg(\neg x\wedge\neg y).
\end{equation}

\begin{definition}
\label{definition:compatible}
We  say that two elements  $a,b\in A$ are
{\it compatible} if there is $c\in A$  such that
$c\sqcup a=a$  and $c\sqcup b=b.$  Otherwise,
$a,b$ are  {\it incompatible}.
\end{definition}

 \begin{proposition}
 \label{proposition:min-is-sqcap}
 If $a$  and $b$ are compatible elements of the RM-algebra
 $\mathcal F_n$  of nonempty faces of the $n$-cube then
 their infimum  $a\sqcap b=a\cap b$  is given by
 \begin{equation}
 \label{equation:cap-curly}
 a\sqcap b=
 \left(1/2\wedge\nabla(a\wedge\neg a)
 \wedge \nabla(b\wedge \neg b)\right)
 \vee \neg\nabla(\neg a \wedge\neg b).
 \end{equation}
 \end{proposition}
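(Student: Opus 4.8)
The plan is to pass to the coordinatewise picture afforded by the identification $\mathcal F_n=\{0,1/2,1\}^n$ and then verify the displayed identity one coordinate at a time in the three-element algebra $\mathfrak Z$. Since the operations $\neg,\nabla,\vee,\wedge$ are all \emph{total} and act pointwise on $\{0,1/2,1\}^n$, the right-hand side of (\ref{equation:cap-curly}) is computed coordinatewise. First I would record the coordinatewise meaning of the hypothesis and of the conclusion: writing $a=(a_1,\ldots,a_n)$ and $b=(b_1,\ldots,b_n)$, compatibility of $a$ and $b$ says precisely that no coordinate $i$ has $\{a_i,b_i\}=\{0,1\}$, and on such coordinates the face intersection $a\sqcap b=(A_0\cup B_0,A_1\cup B_1)$ is given by $(a\sqcap b)_i=0$ if $a_i=0$ or $b_i=0$, by $(a\sqcap b)_i=1$ if $a_i=1$ or $b_i=1$, and by $(a\sqcap b)_i=1/2$ when $a_i=b_i=1/2$.

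Next I would simplify the two disjuncts of the right-hand side in $\mathfrak Z$. For the first, note that $x\wedge\neg x=\min(x,1-x)\in\{0,1/2\}$ takes the value $1/2$ exactly when $x=1/2$, so $\nabla(x\wedge\neg x)=\min(1,2(x\wedge\neg x))$ equals $1$ when $x=1/2$ and $0$ when $x\in\{0,1\}$; hence $1/2\wedge\nabla(a\wedge\neg a)\wedge\nabla(b\wedge\neg b)$ equals $1/2$ precisely when $a_i=b_i=1/2$ and equals $0$ otherwise. For the second, the Kleene De Morgan law gives $\neg a\wedge\neg b=\neg(a\vee b)$, whence by Proposition~\ref{proposition:term-eq}(i) (where $\Delta x=\neg\nabla\neg x$) the second disjunct is $\neg\nabla(\neg a\wedge\neg b)=\Delta(a\vee b)=\max(0,2\max(a_i,b_i)-1)$, which equals $1$ when some $a_i$ or $b_i$ is $1$ and $0$ otherwise.

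Assembling the join $\vee=\max$ of these two summands, I would run through the seven admissible coordinate pairs $(a_i,b_i)$ (all of $\{0,1/2,1\}^2$ except $(0,1)$ and $(1,0)$) and check that the result matches the value of $(a\sqcap b)_i$ tabulated above: the pairs with some entry $1$ give $1$, the pair $(1/2,1/2)$ gives $1/2$, and the remaining pairs give $0$. This yields (\ref{equation:cap-curly}) coordinatewise, hence globally. The one point that requires care---and the reason the compatibility hypothesis cannot be dropped---is that on an \emph{incompatible} coordinate, where $\{a_i,b_i\}=\{0,1\}$, the right-hand side returns $1$ (since there $\Delta(a\vee b)=1$) even though $a\sqcap b$ is then undefined; thus the equation is an identity only after restricting to compatible pairs, and no universal RM-term identity is being asserted.
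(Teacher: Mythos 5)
Your proposal is correct and follows exactly the paper's own route: the paper likewise identifies $\mathcal F_n$ with $\mathfrak Z^n$ and verifies (\ref{equation:cap-curly}) coordinatewise in $\mathfrak Z$, merely leaving the case check implicit. Your explicit evaluation of the two disjuncts and the seven compatible pairs, together with the remark about why compatibility is needed, simply fills in the details the paper omits.
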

 

 \begin{proof}   
  $\mathcal F_n$ is the RM-algebra of all functions
  $f\colon \{1,\ldots,n\}\to\mathfrak Z$ with the
  operations $\mathfrak Z_{\rm RM}.$ One now verifies
(\ref{equation:cap-curly}) for each  $i=1,\ldots,n$
without difficulty. 
\end{proof}


\begin{proposition} 
\label{proposition:inclusion}
Let $A=(A,0,1/2,\sqcup,\partial,\wedge)$ be an RM-algebra.  
Let  the binary relation $\sqsubseteq$ on $A$
be given by stipulating that, for all $a,b\in A,\,\,\,\,$
$a\sqsubseteq b$ iff $a\sqcup b= b$.
Recalling the notation of (\ref{equation:superimpose-post}) we have:

\begin{itemize}
\item[(i)]  $a\sqsubseteq b\,\,$ iff $\,\,\partial(a,0)\sqsubseteq b$ and 
$\partial(a,1)\sqsubseteq b$.

\smallskip
\item[(ii)]  Suppose $c\in A$ is {\rm boolean},  i.e., $c=\nabla c$.
Then $c\sqsubseteq b$  iff   $\neg c\sqcup b=1/2.$

\smallskip
\item[(iii)]  
$a\sqsubseteq b\,\,$  iff $\,\,\neg\partial(a,0) \sqcup b = 1/2\,\,$
and
$\,\,\neg\partial(a,1) \sqcup b = 1/2.$
\end{itemize}
\end{proposition}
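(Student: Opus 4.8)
The plan is to reduce every clause to a routine finite check over the three-element algebra $\mathfrak Z$. Each of (i), (ii), (iii) asserts, for all $a,b\in A$, the equivalence of two finite conjunctions of equations in $a,b$ (with in (ii) a further equational hypothesis $c=\nabla c$ on a third element). By Theorem \ref{theorem:cubic-representation}(a) I may assume $A$ is a subalgebra of a power $\mathfrak Z^{X}$ with all operations computed coordinatewise; hence an equation $s(a,b)=t(a,b)$ holds in $A$ exactly when $s(a(x),b(x))=t(a(x),b(x))$ for every $x\in X$. Consequently a conjunction of equations $P(a,b)$ holds in $A$ iff $P(a(x),b(x))$ holds in $\mathfrak Z$ for all $x$, and likewise for the partner conjunction $Q(a,b)$. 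Thus, once I have the pointwise equivalence $P(\alpha,\beta)\Leftrightarrow Q(\alpha,\beta)$ for all $\alpha,\beta\in\mathfrak Z$, quantifying over coordinates gives $P(a,b)\Leftrightarrow Q(a,b)$ for all $a,b\in A$. In (ii) the hypothesis $c=\nabla c$ forces $c(x)\in\{0,1\}$ at every coordinate, so there the pointwise equivalence need only be established under the side condition that $c$ is boolean, which is exactly what holds at each $x$.

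Everything then comes down to inspecting the $\sqcup$ and $\partial$ tables on $\mathfrak Z$. First I would record three auxiliary facts read off from those tables: that $a\sqsubseteq b$ in $\mathfrak Z$ means $b=1/2$ or $a=b$; that $\partial(a,0)=\nabla a$ \emph{rounds $a$ up} (sending $0,1/2,1$ to $0,1,1$) while $\partial(a,1)$ \emph{rounds $a$ down} (sending $0,1/2,1$ to $0,0,1$); and that the boolean elements of $\mathfrak Z$, i.e.\ those with $c=\nabla c$, are exactly $0$ and $1$. With these in hand, (i) is a $3\times 3$ case check on $(a,b)$: when $b=1/2$ both sides are automatic, and when $b\in\{0,1\}$ the clauses $\partial(a,0)\sqsubseteq b$ and $\partial(a,1)\sqsubseteq b$ become the equalities $\partial(a,0)=b$ and $\partial(a,1)=b$, whose conjunction cuts out precisely $a=b$. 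For (ii), with $c\in\{0,1\}$ one checks that $c\sqsubseteq b$ and $\neg c\sqcup b=1/2$ define the same set of $b$: for $c=0$ both say $b\in\{0,1/2\}$, and for $c=1$ both say $b\in\{1/2,1\}$.

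Finally, (iii) I would obtain as a formal consequence of (i) and (ii) rather than by a fresh computation. The element $\partial(a,0)$ is boolean in every RM-algebra, since $\nabla\partial(x,0)=\partial(x,0)$ holds identically on $\mathfrak Z$ and hence throughout the variety $HSP(\mathfrak Z_{\rm RM})$; likewise $\nabla\partial(x,1)=\partial(x,1)$ is an identity, so $\partial(a,1)$ is boolean as well. Applying (ii) with $c=\partial(a,0)$ gives $\partial(a,0)\sqsubseteq b\Leftrightarrow\neg\partial(a,0)\sqcup b=1/2$, and with $c=\partial(a,1)$ gives $\partial(a,1)\sqsubseteq b\Leftrightarrow\neg\partial(a,1)\sqcup b=1/2$; substituting these two equivalences into (i) yields (iii).

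The main obstacle is not any single computation but stating the reduction in the first paragraph cleanly: I must make sure the coordinatewise quantifier interchange is legitimate for a \emph{biconditional} (as opposed to a single equation), and that the hypothesis ``$c$ boolean'' in (ii) really does propagate to every coordinate so that the conditional pointwise check suffices. Once that bookkeeping is in place, all three parts are finite verifications on $\mathfrak Z$.
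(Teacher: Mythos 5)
Your proof is correct and follows the same route as the paper: reduce to pointwise verification over $\mathfrak Z$ via the representation in Theorem \ref{theorem:cubic-representation}(a), then check the finitely many cases. The only (welcome) refinement is that you obtain (iii) formally from (i) and (ii) using the identities $\nabla\partial(x,0)=\partial(x,0)$ and $\nabla\partial(x,1)=\partial(x,1)$, where the paper instead invokes a direct ``tedious but straightforward'' calculation.
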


 \begin{proof}
By Theorem
 \ref{theorem:cubic-representation}(a), 
 for some totally disconnected
compact Hausdorff space $X$,
$A$ is the  RM-algebra  of all continuous functions
  $f\colon X\to\mathfrak Z$ with the pointwise
  operations of $\mathfrak Z.$  
  The pointwise verification of 
(i)-(ii)  is now immediate. 
(iii) is proved by a tedious but straightforward calculation.
\end{proof}


\begin{theorem}
\label{theorem:deluca}
Given elements $f$ and $g$ in a Post algebra  $A$ of
continuous functions on a boolean space $X$ 
as in Theorem \ref{theorem:post-saga}(iii),
we say that  $f$ is {\em sharper}  than $g$, and write
$f\preceq g$, iff for each  $x\in X$ we either have
  $f(x) \leq g(x)\leq \neg g(x)$
 or $f(x)\geq g(x) \geq \neg g(x)$. 
 This is   the (De Luca-Termini)
 {\rm sharpening order \cite{del}.}

  We then have:

\begin{itemize} 
\smallskip
\item[(i)]  $\preceq$ equips $A$ with a partial order relation.

\smallskip
\item[(ii)] An element $p\in A$  is
  $\preceq$-minimal iff
  it is  boolean.

\smallskip
\item[(iii)]
 The  partial order    $\sqsubseteq$ on the 
RM-algebra
$\mathcal F_n=(\mathcal F_n,0,1/2,\sqcup,
\partial,\wedge)$ given by inclusion 
between nonempty faces of the $n$-cube  
coincides with  the partial order $\preceq$ on the Post 
algebra  $(\mathcal F_n,0,1/2,1,\neg,\nabla,\vee,\wedge)$
of Theorem \ref{theorem:post=cubic}(iii). 
\end{itemize} 
\end{theorem}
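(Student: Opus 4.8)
The plan is to reduce the entire statement to a pointwise computation on the three-element algebra $\mathfrak Z$, exploiting the representation theorems. By Theorem \ref{theorem:post-saga}(iii) (equivalently Theorem \ref{theorem:cubic-representation}(a)) I may assume $A$ is the algebra of all continuous $\mathfrak Z$-valued functions on a boolean space $X$. Since the defining condition of $f\preceq g$ is quantified pointwise over $x\in X$, the relation $\preceq$ is exactly the pointwise order induced by the restriction $\preceq_{\mathfrak Z}$ of $\preceq$ to constants in $\mathfrak Z$. The first task is therefore to tabulate $\preceq_{\mathfrak Z}$: using $\neg 0=1$, $\neg 1=0$ and $\neg(1/2)=1/2$, the two defining inequalities yield that $a\preceq_{\mathfrak Z} b$ holds precisely for the five pairs $0\preceq 0$, $1\preceq 1$, $1/2\preceq 1/2$, $0\preceq 1/2$ and $1\preceq 1/2$; in particular $1/2$ is the top and $0,1$ are the two minimal elements.

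For (i) I would read off directly from this table that $\preceq_{\mathfrak Z}$ is reflexive, antisymmetric and transitive (it is the poset with apex $1/2$ covering the two minimal points $0$ and $1$), and then invoke the general fact that the pointwise order induced by a partial order is again a partial order.

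For (ii), the $\preceq_{\mathfrak Z}$-minimal elements of $\mathfrak Z$ are $0$ and $1$, which are exactly the fixed points of $\nabla$ on $\mathfrak Z$ (note $\nabla(1/2)=1\ne 1/2$), that is, the boolean elements. To lift this to $A$: if $f$ is boolean then $f(x)\in\{0,1\}$ for every $x$, and since nothing lies strictly $\preceq_{\mathfrak Z}$-below $0$ or below $1$, every $g\preceq f$ satisfies $g(x)=f(x)$ at each $x$, so $f$ is $\preceq$-minimal. Conversely, if $f$ is not boolean then $f^{-1}(1/2)$ is a nonempty clopen subset of $X$; defining $g$ to agree with $f$ off this set and to take the value $0$ on it produces a function with $g\prec f$, so $f$ is not minimal. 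The one delicate point is precisely this witness construction, which I expect to be the main (if modest) obstacle: it rests on the fact that continuity into the discrete space $\mathfrak Z$ forces $f^{-1}(1/2)$ to be clopen, so that $g$ is again locally constant and hence a genuine element of $A$.

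For (iii) I would compute $\sqsubseteq_{\mathfrak Z}$ directly from the $\sqcup$-table preceding Proposition \ref{proposition:term-eq}: here $a\sqsubseteq_{\mathfrak Z} b$ iff $a\sqcup b=b$, which holds for exactly the pairs $0\sqsubseteq 0$, $1\sqsubseteq 1$, $1/2\sqsubseteq 1/2$, $0\sqsubseteq 1/2$ and $1\sqsubseteq 1/2$, i.e.\ the very same five pairs as $\preceq_{\mathfrak Z}$. On $\mathcal F_n=\mathfrak Z^n$ the relation $\sqsubseteq$ is the coordinatewise lift of $\sqsubseteq_{\mathfrak Z}$ (since $\sqcup$ acts coordinatewise), and $\preceq$ is the coordinatewise lift of $\preceq_{\mathfrak Z}$ (by its pointwise definition); as these restrictions coincide, so do the two relations on $\mathcal F_n$. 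Finally, because $a\sqcup b$ is the smallest face containing $a$ and $b$ by Theorem \ref{theorem:cubic-representation}(b)(iii), the condition $a\sqsubseteq b$ is literally face inclusion, which completes the identification.
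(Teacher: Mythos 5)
Your proposal is correct, and it is exactly the ``tedious but straightforward verification'' that the paper leaves to the reader: reduce everything to the five-pair relation on $\mathfrak Z$ via the representation as continuous $\mathfrak Z$-valued functions, then lift pointwise. The one point you rightly single out --- that $f^{-1}(1/2)$ is clopen so the modified witness $g$ is again a continuous function and hence lies in $A$ --- is handled correctly and is the only place where more than table-checking is needed.
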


\begin{proof}
A tedious but straightforward verification.
\end{proof}

 \section{The underlying logic of RM-algebras}
 \label{section:logic}
\subsection*{Introducing RM-logic}
While by Theorem
\ref{theorem:post=cubic},
 RM-algebras are an inessential variant of Post algebras
(of order 3), in this section we will introduce
a consequence relation
 arising from the De Luca-Termini sharpening order  
  $\preceq\,\,=\,\,\sqsubseteq$  of Theorem \ref{theorem:deluca}.
The resulting logic turns out to be
 sharply different from Post logic.

For  $\mathcal X=\{X_1,X_2,\ldots,X_\alpha,\ldots\}$ a fixed but otherwise arbitrary (possibly uncountable)
nonempty  set of variable
symbols, the set $\mathsf{FORM}_{\mathcal X}$ of formulas
is constructed 
 in the usual way by finitely many applications of the
 connectives
   $\sqcup,\partial,\wedge$ starting from the variables
   of $\mathcal X$
and the  constant symbols  $0$ and $1/2.$

A {\it valuation}  is a function
 $V\colon 
\mathsf{FORM}_{\mathcal X}\to\mathfrak Z$  
that assigns value $1/2$ to the symbol 
$1/2$,  value 
0 to the symbol
0, and for each binary connective $\ast\in \{\sqcup,\partial,
\wedge\}\,\,\,$ satisfies the identity
$
V(\phi\ast\psi)=V(\phi)\ast V(\psi).
$
Since $V$ is uniquely determined by its restriction
$v=V\restrict\mathcal X$, and $v$ ranges over
all elements of the set $ 
\mathfrak Z^{\mathcal X}$,   then every 
$\phi\in \mathsf{FORM}_\mathcal X$ determines the function
$\hat\phi\colon \mathfrak Z^{\mathcal X}
\to \mathfrak Z$ given by
$
\hat\phi(v)=V(\phi)  \quad\mbox{for all \,\,\,} v\in \mathfrak Z^{\mathcal X}. 
$

 In particular, for each
 $v\in  \mathfrak Z^{\mathcal X}$ and variable symbol
 $X_\alpha \in\mathcal X,$
 \begin{equation}
 \label{equation:trit}
 \widehat{X_\alpha}(v)=v_\alpha,
 \end{equation}
 so that $ \widehat{X_\alpha}$ is the $\alpha$th coordinate function
 on  $ \mathfrak Z^{\mathcal X}$.

Given formulas $\phi,\psi\in  \mathsf{FORM}_\mathcal X$
we write $\phi\equiv_\Diamond\psi$ (read:  $\phi$ is
{\it equivalent} to $\psi$) if  $\hat\phi=\hat\psi.$
We will tacitly identify $\hat\phi$ with the equivalence
class $\phi/\negthickspace\equiv_\Diamond$.
The set $ \mathsf{FORM}_\mathcal X/\negthickspace\equiv_\Diamond$  of equivalence classes
is naturally equipped with the distinguished elements
 0 and $1/2$
(respectively for the constant functions  0 and  $1/2$
over $ \mathfrak Z^{\mathcal X}$),
as well as with the operations  $\sqcup, \partial, \wedge,$
where $\widehat{\phi\ast\psi}=\hat\phi\ast\hat\psi$ with
the pointwise operation  $\ast\in \{\sqcup,\partial,
\wedge\}$  on  $\mathfrak Z$.
By   abuse of notation, the resulting RM-algebra  
$
\{\hat\phi\mid\phi\in \mathsf{FORM}_\mathcal X\}
$
will be denoted 
 $\mathsf{FORM}_\mathcal X/\negthickspace\equiv_\Diamond$.
 
 \medskip

\begin{proposition}
\label{proposition:domenica}
For any,
possibly uncountable,
 set $\mathcal X \not= \emptyset$ of variables
and formula  $\phi\in \mathsf{FORM}_{\mathcal X}$,
let us equip  $\mathfrak Z^{\mathcal X}$ with the  
product  topology of the discrete set   $\mathfrak Z$. It 
 follows that
$\hat\phi$ is continuous. 
Further, 
 $\mathsf{FORM}_\mathcal X/\negthickspace\equiv_\Diamond$ 
is (isomorphic to)
 the free RM-algebra  over
the free generating set 
$\{X/\negthickspace\equiv_\Diamond\mid X\in \mathcal X\}$.
\end{proposition}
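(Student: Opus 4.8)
The plan is to prove the two assertions in turn, establishing continuity of $\hat\phi$ first and then using it, together with Theorem \ref{theorem:cubic-representation}(c), to obtain freeness.

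For continuity the decisive observation is that a formula $\phi$ involves only finitely many variables; let $F\subseteq\mathcal X$ denote this finite set. Then $\hat\phi(v)$ depends on $v$ only through the restriction $v\restrict F$, so $\hat\phi$ factors as $\hat\phi=g\circ\pi_F$, where $\pi_F\colon\mathfrak Z^{\mathcal X}\to\mathfrak Z^{F}$ is the canonical projection and $g\colon\mathfrak Z^{F}\to\mathfrak Z$. The map $\pi_F$ is continuous by definition of the product topology, while $\mathfrak Z^{F}$, being a finite power of the discrete space $\mathfrak Z$, is itself discrete, so every map out of it—in particular $g$—is automatically continuous. Hence $\hat\phi$ is continuous. (One may alternatively argue by induction on the construction of $\phi$: the coordinate functions $\widehat{X_\alpha}$ are projections, the constants are locally constant, and each connective $\sqcup,\partial,\wedge$ is a continuous map $\mathfrak Z^2\to\mathfrak Z$ because $\mathfrak Z$ is discrete; continuity is thus preserved at each step.)

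For freeness I would read $\mathsf{FORM}_{\mathcal X}$ as the absolutely free (\emph{term}) algebra on $\mathcal X$ in the signature $(0,1/2,\sqcup,\partial,\wedge)$, and consider the evaluation map $e\colon\phi\mapsto\hat\phi$ into the RM-algebra $\mathfrak Z^{\mathfrak Z^{\mathcal X}}$ of all functions $\mathfrak Z^{\mathcal X}\to\mathfrak Z$ with the pointwise operations of $\mathfrak Z_{\rm RM}$. By construction $e$ is a homomorphism sending each variable $X_\alpha$ to the coordinate function $\widehat{X_\alpha}$, and by the definition of $\equiv_\Diamond$ its kernel congruence is precisely $\equiv_\Diamond$. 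Thus $\phi/\negthickspace\equiv_\Diamond\mapsto\hat\phi$ identifies $\mathsf{FORM}_\mathcal X/\negthickspace\equiv_\Diamond$ with the subalgebra of $\mathfrak Z^{\mathfrak Z^{\mathcal X}}$ generated by the coordinate functions and constants, matching the distinguished generators $X/\negthickspace\equiv_\Diamond$ with the $\widehat{X}$. By the continuity just proved and Theorem \ref{theorem:cubic-representation}(c) (with $\kappa=|\mathcal X|$) this subalgebra is the free RM-algebra on the coordinate functions as free generators, which gives the claim.

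The step I expect to be the crux is the identification of $\equiv_\Diamond$ with the free-algebra congruence of the variety $HSP(\mathfrak Z_{\rm RM})$. This is where Theorem \ref{theorem:post=cubic}(i) does the work: for terms $\phi,\psi$ one has $\phi\equiv_\Diamond\psi$ iff $\hat\phi=\hat\psi$, i.e.\ iff $\phi=\psi$ holds in $\mathfrak Z_{\rm RM}$ under all valuations in $\mathfrak Z$; since $\mathfrak Z_{\rm RM}$ generates the variety, this holds iff $\phi=\psi$ is a law of all RM-algebras, iff $\phi,\psi$ have the same image in the free RM-algebra—so $\equiv_\Diamond$ is exactly the fully invariant congruence cutting out that free algebra. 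The residual point, that $e$ is onto the algebra of \emph{all} continuous functions of Theorem \ref{theorem:cubic-representation}(c), is a routine functional-completeness verification: over finitely many coordinates one forms the $\{0,1\}$-valued indicator terms of the three truth values (e.g.\ $\Delta x$ for $x=1$, $\Delta\neg x$ for $x=0$, and $\nabla x\wedge\nabla\neg x$ for $x=1/2$) and assembles any target function in disjunctive normal form; for infinite $\mathcal X$ this reduces to the finite case, since—by the factorization $g\circ\pi_F$ of the first paragraph—every continuous function already depends on finitely many coordinates.
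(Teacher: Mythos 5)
Your proposal is correct and follows essentially the same route as the paper, whose proof is just a two-line sketch: continuity by induction on the number of connectives (which you give as your parenthetical alternative; your factorization through the finite projection $\pi_F$ is an equivalent and equally valid way to see it), and freeness as a reformulation of Theorem \ref{theorem:cubic-representation}(c) via the evaluation homomorphism whose kernel is $\equiv_\Diamond$. Your additional details---identifying $\equiv_\Diamond$ with the fully invariant congruence of $HSP(\mathfrak Z_{\rm RM})$ and checking functional completeness---are exactly the content the paper leaves implicit in that cited theorem.
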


\begin{proof} 
The first statement follows by induction on the
number of connectives in $\phi.$
The second is essentially a reformulation of Theorem
  \ref{theorem:cubic-representation}(c).  
 \end{proof}

For any  $\Theta\subseteq 
  \mathsf{FORM}_{\mathcal X}$
 and $\phi\in \mathsf{FORM}_{\mathcal X}$
 we  say that $\Theta$ is {\em incompatible} if
 there is a valuation $v\in \mathfrak Z^{\mathcal X}$ and 
 formulas $\theta_1,\theta_2\in\Theta$
 such that  $\hat\theta_1(v)=1-\hat\theta_2(v)$.
 Otherwise,  $\Theta$ is {\em compatible}. 
 
 A moment's reflection shows that 
 $\theta_1$  and $\theta_2$  are compatible
 iff $\hat\theta_1$  and $\hat\theta_2$ are compatible
 in the sense of Definition 	\ref{definition:compatible}.

 \begin{definition}[RM-logic, defined via its
 consequence relation]
 \label{definition:culmine}
 We say that $\phi$ is a {\it consequence} of $\Theta,$
 and we write  $\Theta\models_\Diamond \phi$, according
 to the following stipulation:
 
 \smallskip
 \begin{itemize}
\item  If $\Theta$ is incompatible then
 every formula $\psi$ is a consequence of $\Theta$.

 \smallskip
  \item If $\Theta$ is compatible then  $\phi$ is a consequence
 of $\Theta$ iff
\begin{equation}
\label{equation:consequence}
 \boxed{
\forall v\in \mathfrak Z^{\mathcal X}\,\,\,
\exists
 \theta\in \Theta\cup\{{1}/{2}\} \mbox{ such that } \hat\theta(v)\sqsubseteq
 \hat\phi(v), \,\,\mbox{\rm i.e.,}\,\,\, \hat\theta(v)\sqcup
 \hat\phi(v)= \hat\phi(v).
 }
\end{equation}
 \end{itemize}
 In particular, 
 $\emptyset \models_\Diamond \phi$  iff
  $1/2 \models_\Diamond \phi$ iff $\hat\phi$ is the constant
  function $1/2$ over  $\mathfrak Z^{\mathcal X}$.
In this case we  write
 $\models_\Diamond \phi$ instead of    $\emptyset \models_\Diamond \phi$, and  say that 
 $\phi$  is a {\em tautology}.  
 If  $\Theta=\{\theta\}$ is a singleton then for any formula $\psi$ we write
 $\theta \models_\Diamond \psi$ instead of
  $\{\theta\}\models_\Diamond \psi$. 
   \end{definition}

\medskip
If  $\mathcal X\subseteq \mathcal Y$  then
$\mathsf{FORM}_\mathcal X\subseteq 
  \mathsf{FORM}_\mathcal Y$, and one might wonder
whether given  $\Theta\subseteq \mathcal X$ 
and $\phi\in \mathcal X$ we should write
$\Theta\models_{\Diamond,\mathcal X} \phi$
and
$\Theta\models_{\Diamond,\mathcal Y} \phi$
to  distinguish
between  $\Theta\models_\Diamond \phi$
in  $\mathsf{FORM}_\mathcal X$  and 
$\Theta\models_\Diamond \phi$ in 
$\mathsf{FORM}_\mathcal Y$.
The following result shows that no such
notational precaution is necessary; its proof is an
immediate consequence of the definition:

	    \begin{proposition}
	    Suppose $\mathcal X\subseteq \mathcal Y$,
$\Theta\subseteq \mathsf{FORM}_\mathcal X$ 
and $\phi\in \mathsf{FORM}_\mathcal X$. Then 
	    $
	    \Theta\models_{\Diamond,\mathcal X} \phi
\,\,\,\mbox{\rm iff}\,\,\,
	    \Theta\models_{\Diamond,\mathcal Y} \phi.
	    $
\end{proposition}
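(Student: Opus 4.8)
The plan is to reduce the whole statement to a single \emph{locality} observation: since no formula in $\Theta\cup\{\phi\}$ mentions a variable outside $\mathcal X$, the value each such formula takes at a $\mathcal Y$-valuation $w\in\mathfrak Z^{\mathcal Y}$ depends only on the restriction $w\restrict\mathcal X\in\mathfrak Z^{\mathcal X}$. Writing $\rho\colon\mathfrak Z^{\mathcal Y}\to\mathfrak Z^{\mathcal X}$ for this restriction map, I would first record two facts. The first is that $\rho$ is surjective: any $v\in\mathfrak Z^{\mathcal X}$ extends to a $\mathcal Y$-valuation by assigning arbitrary values to the variables in $\mathcal Y\setminus\mathcal X$. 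The second is the locality identity $\hat\psi(w)=\hat\psi(\rho(w))$, valid for every $\psi\in\mathsf{FORM}_{\mathcal X}$ and every $w\in\mathfrak Z^{\mathcal Y}$, where on the left $\psi$ is read as a $\mathcal Y$-formula and on the right as an $\mathcal X$-formula.

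The locality identity is proved by a routine induction on the number of connectives in $\psi$. It holds for the constants $0,1/2$ and for each variable $X_\alpha\in\mathcal X$ by (\ref{equation:trit}), and it passes through each binary connective $\ast\in\{\sqcup,\partial,\wedge\}$ because these connectives are interpreted pointwise on $\mathfrak Z$. With this identity in hand, compatibility of $\Theta$ becomes the same notion in both contexts: $\Theta$ is incompatible over $\mathcal Y$ iff there are $\theta_1,\theta_2\in\Theta$ and $w\in\mathfrak Z^{\mathcal Y}$ with $\hat\theta_1(w)=1-\hat\theta_2(w)$, and by the identity this reads $\hat\theta_1(\rho(w))=1-\hat\theta_2(\rho(w))$; surjectivity of $\rho$ makes this equivalent to the existence of a witness $v\in\mathfrak Z^{\mathcal X}$ to incompatibility over $\mathcal X$.

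It then remains to split along the two clauses of Definition \ref{definition:culmine}. If $\Theta$ is incompatible—equivalently in either context, by the previous step—then $\Theta\models_{\Diamond,\mathcal X}\phi$ and $\Theta\models_{\Diamond,\mathcal Y}\phi$ both hold vacuously, and there is nothing to prove. If $\Theta$ is compatible, I would rewrite the $\mathcal Y$-clause (\ref{equation:consequence}) using the locality identity: $\Theta\models_{\Diamond,\mathcal Y}\phi$ asserts that for every $w\in\mathfrak Z^{\mathcal Y}$ there is $\theta\in\Theta\cup\{1/2\}$ with $\hat\theta(\rho(w))\sqsubseteq\hat\phi(\rho(w))$. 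Since $\rho$ is surjective, quantifying over $w\in\mathfrak Z^{\mathcal Y}$ and then passing to $\rho(w)$ is the same as quantifying directly over $v\in\mathfrak Z^{\mathcal X}$, so this condition is verbatim the $\mathcal X$-clause defining $\Theta\models_{\Diamond,\mathcal X}\phi$. The two relations therefore coincide.

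I do not anticipate any genuine obstacle: the entire content is the locality identity, whose induction is routine, together with the surjectivity of $\rho$, which is immediate. The only mild care required is to treat the incompatibility clause on the same footing as the main condition, which is why I would verify that compatibility is context-independent \emph{before} unwinding the quantifier in (\ref{equation:consequence}).
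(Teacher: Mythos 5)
Your proof is correct and is essentially the paper's own argument: the paper simply declares the proposition ``an immediate consequence of the definition,'' and what you have written is the careful unwinding of that claim via the locality identity $\hat\psi(w)=\hat\psi(\rho(w))$, the surjectivity of the restriction map $\rho$, and the resulting context-independence of both the compatibility clause and the condition (\ref{equation:consequence}). No gaps; the only thing you add to the paper is the (welcome) explicit check that incompatibility itself does not depend on the ambient variable set.
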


\begin{proposition}
\label{proposition:anti-deduction}
For    any formula  $\phi$ the
following conditions are equivalent:
\begin{itemize}
\item[(i)] $\phi$ is a tautology;
\item[(ii)]  both  $\,\,0\models_\Diamond \phi\,\,$  and $\,\,\partial(1/2,0)
\models_\Diamond \phi$;
\item[(iii)]  $\alpha\sqcup\neg\alpha\models_\Diamond \phi\,$ for some 
formula $\,\alpha$;
\item[(iv)]  $\beta\models_\Diamond \phi\,$ for 
every formula  $\,\beta$.
\end{itemize}
\end{proposition}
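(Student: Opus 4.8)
The plan is to translate every clause into a pointwise condition on the function $\hat\phi\colon\mathfrak Z^{\mathcal X}\to\mathfrak Z$, using that, by the $\sqcup$-table, the relation $a\sqsubseteq b$ (i.e.\ $a\sqcup b=b$) orders $\mathfrak Z$ with the two minimal elements $0,1$ sitting below the top element $1/2$; concretely, $0\sqsubseteq b$ iff $b\in\{0,1/2\}$, $\,1\sqsubseteq b$ iff $b\in\{1/2,1\}$, and $1/2\sqsubseteq b$ iff $b=1/2$. I would also record the elementary facts, read off the tables and (\ref{equation:superimpose-post}), that $\partial(1/2,0)$ is the constant $1$, that $\neg x=1-x$, and hence that $x\sqcup\neg x=1/2$ for every $x\in\mathfrak Z$, so that $\widehat{\alpha\sqcup\neg\alpha}$ is the constant $1/2$ for every formula $\alpha$. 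Finally, since $\sqcup$ is idempotent, every singleton $\{\theta\}$ is compatible in the sense of Definition \ref{definition:compatible} (take $c=\hat\theta$), so that for a single premise the consequence relation is always computed by the compatible clause (\ref{equation:consequence}).

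Next I would establish (i)$\Leftrightarrow$(ii). Reading (\ref{equation:consequence}) for the compatible singleton $\{0\}$, the relation $0\models_\Diamond\phi$ holds iff for every $v$ one has $0\sqsubseteq\hat\phi(v)$ or $1/2\sqsubseteq\hat\phi(v)$, i.e.\ iff $\hat\phi(v)\in\{0,1/2\}$ for all $v$; equivalently, $\hat\phi$ never takes the value $1$. Symmetrically, using $\partial(1/2,0)=1$, the relation $\partial(1/2,0)\models_\Diamond\phi$ holds iff $\hat\phi$ never takes the value $0$. Conjoining the two conditions forces $\hat\phi\equiv 1/2$, which by Definition \ref{definition:culmine} is exactly the assertion that $\phi$ is a tautology; this yields both directions at once.

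I would then close the cycle (i)$\Rightarrow$(iv)$\Rightarrow$(iii)$\Rightarrow$(i). For (i)$\Rightarrow$(iv): if $\hat\phi\equiv 1/2$, then for any $\beta$ the witness $\theta=1/2\in\Theta\cup\{1/2\}$ satisfies $\hat\theta(v)=1/2\sqsubseteq 1/2=\hat\phi(v)$ at every $v$, so $\beta\models_\Diamond\phi$. The step (iv)$\Rightarrow$(iii) is immediate: apply (iv) to $\beta=\alpha\sqcup\neg\alpha$ for any fixed $\alpha$. For (iii)$\Rightarrow$(i): by the preliminary remark $\{\alpha\sqcup\neg\alpha\}$ is compatible and $\widehat{\alpha\sqcup\neg\alpha}\equiv 1/2$, so (\ref{equation:consequence}) reduces to requiring $1/2\sqsubseteq\hat\phi(v)$ for every $v$, i.e.\ $\hat\phi(v)=1/2$ for all $v$; hence $\phi$ is a tautology.

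The one point demanding care --- and the only place the argument could go wrong --- is the compatibility bookkeeping in (iii): since $\widehat{\alpha\sqcup\neg\alpha}$ is constantly $1/2$, one must be certain that the explosive (incompatible) clause of Definition \ref{definition:culmine} is not triggered, for otherwise $\alpha\sqcup\neg\alpha\models_\Diamond\psi$ would hold for every $\psi$ and (iii) would collapse into triviality. This is precisely why I isolate the singleton-compatibility fact at the outset, reading incompatibility through Definition \ref{definition:compatible} (equivalently, noting that a lone premise evaluated at a single $v$ can never exhibit a coordinate carrying the two opposite determined values $0$ and $1$). Everything else is the routine three-value verification summarised above.
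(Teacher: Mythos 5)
Your proof is correct, and it is exactly the routine pointwise verification that the paper's one-word proof (``Trivial'') leaves to the reader: translate $\sqsubseteq$ on $\mathfrak Z$ into the three cases $0\sqsubseteq b\Leftrightarrow b\neq 1$, $1\sqsubseteq b\Leftrightarrow b\neq 0$, $1/2\sqsubseteq b\Leftrightarrow b=1/2$, and chase the definitions. Your explicit check that the constant-$1/2$ premise $\alpha\sqcup\neg\alpha$ must be counted as compatible (reading incompatibility through Definition \ref{definition:compatible}, i.e.\ as the two premises taking the opposite \emph{determined} values $0$ and $1$ at some valuation) is a worthwhile precaution, since a literal reading of the displayed incompatibility condition $\hat\theta_1(v)=1-\hat\theta_2(v)$ would be satisfied at $1/2$ and would trigger explosion, breaking (iii)$\Rightarrow$(i).
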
 

\begin{proof} Trivial.
\end{proof}

\begin{proposition}
\label{proposition:two-tautologies}
For any two formulas $\alpha,\beta\in 
\mathsf{FORM}_\mathcal X$ the following
conditions are equivalent:
\begin{itemize}
\item[(i)] both $\alpha$ and $\beta$ are tautologies;
\item[(ii)] $\alpha\wedge\neg\alpha\wedge
\beta\wedge\neg\beta$ is a tautology.
\end{itemize}
\end{proposition}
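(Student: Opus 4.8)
The plan is to reduce the whole statement to a pointwise computation on the three–element chain $\mathfrak Z=\{0,1/2,1\}$, exploiting the characterization recorded in Definition \ref{definition:culmine}: a formula $\phi$ is a tautology precisely when $\hat\phi$ is the constant function $1/2$ on $\mathfrak Z^{\mathcal X}$. Thus I want to show that $\widehat{\alpha\wedge\neg\alpha\wedge\beta\wedge\neg\beta}$ is identically $1/2$ if and only if both $\hat\alpha$ and $\hat\beta$ are identically $1/2$.

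The first step is to record the behavior of $x\wedge\neg x$ on $\mathfrak Z$. Recalling from (\ref{equation:post-operations}) that $\neg x=1-x$ and $\wedge=\min$, a direct inspection of the three cases gives $x\wedge\neg x=1/2$ when $x=1/2$ and $x\wedge\neg x=0$ when $x\in\{0,1\}$. In particular $x\wedge\neg x\in\{0,1/2\}$ for every $x$, attaining the value $1/2$ exactly at the midpoint. Next, since the connectives act coordinatewise on truth values, I would evaluate the compound formula at an arbitrary valuation $v\in\mathfrak Z^{\mathcal X}$ as
\[
\widehat{\alpha\wedge\neg\alpha\wedge\beta\wedge\neg\beta}(v)
=\min\bigl(\hat\alpha(v)\wedge\neg\hat\alpha(v),\,\hat\beta(v)\wedge\neg\hat\beta(v)\bigr).
\]
By the preceding step both arguments of the $\min$ lie in $\{0,1/2\}$, so the value equals $1/2$ iff each argument equals $1/2$, that is, iff $\hat\alpha(v)=1/2$ and $\hat\beta(v)=1/2$.

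Finally I would quantify over $v$. The displayed expression is constantly $1/2$ iff for every valuation $v$ one has $\hat\alpha(v)=\hat\beta(v)=1/2$, which is exactly the assertion that $\hat\alpha$ and $\hat\beta$ are both the constant function $1/2$, i.e.\ that $\alpha$ and $\beta$ are each tautologies. This yields the equivalence (i)$\iff$(ii). There is no genuine obstacle in this argument; the only point requiring attention is the bound $x\wedge\neg x\leq 1/2$ from the first step, since it is precisely this bound that forces the four–literal conjunction to reach $1/2$ only when \emph{each} conjunct already does, thereby excluding any accidental compensation among the literals $\alpha,\neg\alpha,\beta,\neg\beta$.
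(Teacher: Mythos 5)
Your proof is correct and follows essentially the same route as the paper's: both reduce the claim to the pointwise observation that $x\wedge\neg x\wedge y\wedge\neg y=1/2$ in $\mathfrak Z$ exactly when $x=y=1/2$, which the paper states as trivial and you spell out via the bound $x\wedge\neg x\leq 1/2$.
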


\begin{proof} Using Proposition
\ref{proposition:domenica}, let us identify
$\mathsf{FORM}_\mathcal X/\negthickspace\equiv_\Diamond$
with the free RM-algebra over the free generating
set  $\mathcal X/\negthickspace\equiv_\Diamond,$  
given by 
Theorem  \ref{theorem:cubic-representation}(c).
Trivially, for every $v\in \mathfrak Z^\mathcal X$,
$\,\,\,
\hat\alpha(v)\wedge\neg\hat\alpha(v)\wedge
\hat\beta(v)\wedge\neg\hat\beta(v)=1/2
\mbox{ \,\,iff  \,\, $\hat\alpha(v)=\hat\beta(v)=1/2$.}
$
 \end{proof}

\begin{theorem}[Compactness]
\label{theorem:compactness}
Let  $\Theta\subseteq \mathsf{FORM}_\mathcal X$
 be an infinite set of formulas and $\phi
 \in  \mathsf{FORM}_\mathcal X$. 
Then the following are equivalent:
\begin{itemize}
\item[(i)] 
$\Theta\models_\Diamond \phi.$ 
\item[(ii)]  
$\{\theta_1,\ldots,\theta_k\}\models_\Diamond \phi\,\,\,$
for some
$\theta_1,\ldots,\theta_k\in \Theta$.
\end{itemize}
\end{theorem}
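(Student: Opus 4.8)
The plan is to realize $\mathsf{FORM}_{\mathcal X}/\!\equiv_\Diamond$, via Proposition \ref{proposition:domenica}, as an algebra of continuous $\mathfrak Z$-valued functions on the Tychonov cube $\mathfrak Z^{\mathcal X}$, where $\mathfrak Z$ carries the discrete topology, so that $\mathfrak Z^{\mathcal X}$ is compact. The implication (ii)$\Rightarrow$(i) is then a short argument with a case split on $\Theta$: if $\Theta$ is incompatible, $\Theta\models_\Diamond\phi$ holds by fiat; if $\Theta$ is compatible then so is its subset $\{\theta_1,\dots,\theta_k\}$, and any witness $\theta\in\{\theta_1,\dots,\theta_k\}\cup\{1/2\}$ produced by (\ref{equation:consequence}) already lies in $\Theta\cup\{1/2\}$ and serves for $\Theta$. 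Hence $\Theta\models_\Diamond\phi$.

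The substance lies in (i)$\Rightarrow$(ii). If $\Theta$ is incompatible, then by definition a single valuation $v_0$ and a single pair $\theta_1,\theta_2\in\Theta$ witness this; the two-element set $\{\theta_1,\theta_2\}$ is therefore already incompatible and so entails every formula, giving (ii) with $k=2$. If instead $\Theta$ is compatible, I would argue by compactness. For each $\theta\in\Theta\cup\{1/2\}$ set
\[
U_\theta=\{v\in\mathfrak Z^{\mathcal X}\mid \hat\theta(v)\sqsubseteq\hat\phi(v)\}.
\]
Because $\hat\theta$ and $\hat\phi$ are continuous and $\mathfrak Z$ is finite and discrete, the map $v\mapsto(\hat\theta(v),\hat\phi(v))$ is continuous into the discrete square $\mathfrak Z\times\mathfrak Z$, so each $U_\theta$ is the preimage of the subset $\{(a,b)\mid a\sqsubseteq b\}$ and hence is clopen.

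The hypothesis $\Theta\models_\Diamond\phi$, read through (\ref{equation:consequence}), says exactly that the clopen family $\{U_\theta\mid\theta\in\Theta\cup\{1/2\}\}$ covers $\mathfrak Z^{\mathcal X}$. By compactness I extract a finite subcover and discard the index $1/2$, which is in any case always available in (\ref{equation:consequence}), leaving finitely many $\theta_1,\dots,\theta_k\in\Theta$ with $U_{\theta_1}\cup\dots\cup U_{\theta_k}\cup U_{1/2}=\mathfrak Z^{\mathcal X}$; being a subset of the compatible $\Theta$, the set $\{\theta_1,\dots,\theta_k\}$ is compatible, and this cover is precisely $\{\theta_1,\dots,\theta_k\}\models_\Diamond\phi$ (should the subcover use $1/2$ alone, $\phi$ is a tautology and any single element of the nonempty $\Theta$ will do). I expect the only real obstacle to be the verification that each $U_\theta$ is open: this is what licenses the finite subcover, and it rests squarely on the continuity supplied by Proposition \ref{proposition:domenica} together with the discreteness of $\mathfrak Z$. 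The remaining ingredients, namely the compatibility case split and the bookkeeping of the $1/2$-witness, are routine.
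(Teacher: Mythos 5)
Your proof is correct and follows essentially the same route as the paper: both arguments rest on the continuity of the functions $\hat\theta$ (Proposition \ref{proposition:domenica}) and the compactness of $\mathfrak Z^{\mathcal X}$ to extract a finite subcover, after the same case split on compatibility. The only difference is cosmetic: the paper covers $\hat\phi^{-1}(1)$ and $\hat\phi^{-1}(0)$ separately by neighbourhoods on which some premise takes the value $1$ (resp.\ $0$), whereas you cover the whole cube at once by the clopen sets $U_\theta=\{v\mid\hat\theta(v)\sqsubseteq\hat\phi(v)\}$, which is an equivalent repackaging of the same compactness argument.
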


\begin{proof}  In case $\Theta$ is incompatible,
both sides are true (actually, (ii) holds with
$k=2$) and hence they are equivalent. 

 \medskip
Now suppose $\Theta$ is compatible.

 \medskip
\noindent
 (ii)$\Rightarrow $(i)  Immediate by Definition \ref{definition:culmine}.

  
 \medskip
\noindent
 (i)$\Rightarrow $(ii) 
We reformulate  (\ref{equation:consequence}) in
Definition \ref{definition:culmine} as follows:
For  every valuation $v\in \mathfrak Z^\mathcal X$,
 
 \smallskip
\begin{itemize}
\item[(a)] If   $\hat\phi(v)=1$  then there is a formula
$\alpha_v \in \Theta$  with $\widehat\alpha_v(v)=1.$

\smallskip
\item[(b)] If  $\hat\phi(v)=0$  then there is a formula
$\beta_v \in \Theta$  with $\widehat\beta_v(v)=0.$

\smallskip
\noindent
whence

\smallskip
\item[(c)] If for every $\theta\in\Theta$ we have
 $\hat\theta(v)=1/2$ then 
$\hat\phi(v)=1/2$.
 \end{itemize}
For each valuation $v$  the function $\widehat\alpha_v$
is continuous  (by Proposition \ref{proposition:domenica})
and hence  $\widehat\alpha_v$
  has value 1 on a clopen neighbourhood  $\mathcal N_v\ni v.$
Letting  $v$ range over $\hat\phi^{-1}(1)$, we see that
 the
 clopen set $\hat\phi^{-1}(1)$  is covered by the
 family of neighbourhoods
 $\mathcal N_v$. The compactness of $\mathfrak Z^\mathcal X$ 
 yields formulas 
 $\alpha_1,\ldots,\alpha_h\in\Theta$ such that
 \begin{itemize}
 \item[(a')] If   $\hat\phi(v)=1$  then there is  $i\in\{1,\ldots,h\}$
such that  $\widehat\alpha_i(v)=1.$
 \end{itemize}
Similarly, there are
$\beta_1,\ldots,\beta_k\in\Theta$ such that 
 \begin{itemize}
 \item[(b')] If   $\hat\phi(v)=0$  then there is
 then there is  $j \in\{1,\ldots,k\}$
such that  $\widehat\beta_j(v)=0.$
 \end{itemize}
 
The compatibility of $\Theta$ ensures that
its subset $\{\alpha_1,\ldots,\alpha_h,\beta_1,\ldots,\beta_k\}$
(is compatible and)  satisfies (ii).
\end{proof}

\medskip

\medskip
 \subsection*{Finite sets of premises}
 
We now consider the special case when
the set $\mathcal X$  of variables is
finite.
We write $\mathsf{FORM}_m$ instead of
$\mathsf{FORM}_{\{X_1,\ldots,X_m\}}$
(as well as instead of
$\mathsf{FORM}_{\{X_0,\ldots,X_{m-1}\}}$).

 \bigskip
 
 Recalling  Theorems \ref{theorem:post=cubic} and
\ref{theorem:cubic-representation} 
and  we immediately
have:

\begin{proposition}[Representation]
\label{proposition:logical-representation}
Every  function
$f\colon \mathfrak Z^m\to \mathfrak Z$ has the 
form $f=\hat\psi$  for some formula
$\psi\in \mathsf{FORM}_m$.  
Further, 
 $\mathsf{FORM}_m/\negthickspace\equiv_\Diamond$
 is isomorphic  to  the free RM-algebra
over the free generating set
 $\{X_1/\negthickspace\equiv_\Diamond,
 \ldots,X_m/\negthickspace\equiv_\Diamond\}.$
Thus
  $\mathsf{FORM}_m/\negthickspace\equiv_\Diamond$
   is isomorphic to the RM-algebra  $\mathcal F_{3^m}$
    of nonempty faces
 of the $3^m$-cube, with the operations
 $\sqcup,\partial,\wedge$ of  
Theorem \ref{theorem:cubic-representation}(b). 
\end{proposition}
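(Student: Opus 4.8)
The plan is to prove the three assertions in turn, the only substantial one being functional completeness (the first sentence); the other two are bookkeeping on top of results already established. First I would note that, specializing Proposition~\ref{proposition:domenica} to the finite set $\mathcal X=\{X_1,\dots,X_m\}$, the quotient $\mathsf{FORM}_m/\negthickspace\equiv_\Diamond$ is already known to be the free RM-algebra over $\{X_1/\negthickspace\equiv_\Diamond,\dots,X_m/\negthickspace\equiv_\Diamond\}$; this settles the freeness claim outright. The identification with $\mathcal F_{3^m}$ will then follow by a cardinality count once functional completeness is in hand, so the entire weight of the argument falls on showing that the term functions of $\mathfrak Z_{\rm RM}$ in $m$ variables exhaust \emph{all} maps $\mathfrak Z^m\to\mathfrak Z$.

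For functional completeness I would work through the term-equivalent Post structure. By Proposition~\ref{proposition:term-eq}(i),(iv) the operations $\neg,\nabla,\vee$ and $\Delta$ of (\ref{equation:Delta}) are RM-terms (e.g. $\Delta x=\neg\nabla\neg x$), and all three constants are available, with $1=\partial(1/2,0)$ by (\ref{post-from-cubic}). One first writes down the three unary ``point indicators''
\[
\chi_0(x)=\neg\nabla x,\qquad \chi_{1/2}(x)=\nabla x\wedge\nabla\neg x,\qquad \chi_1(x)=\Delta x,
\]
each of which takes value $1$ exactly at the displayed argument and $0$ otherwise (a three-line check). Given an arbitrary $f\colon\mathfrak Z^m\to\mathfrak Z$, I would then form the disjunctive normal form
\[
f=\bigvee_{a\in\mathfrak Z^m}\Bigl(f(a)\wedge\bigwedge_{i=1}^m\chi_{a_i}(X_i)\Bigr)
\]
and verify it pointwise: the inner meet is the indicator of the single point $a$, the conjunct $f(a)\wedge(\,\cdot\,)$ equals $f(a)$ at $a$ and $0$ elsewhere, and, since $0$ is the least element, the outer join returns $f(b)$ at each $b$. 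As each $f(a)\in\{0,1/2,1\}$ is a constant and the join and meets are finite, this expression is a genuine RM-term built from $\sqcup,\partial,\wedge,0,1/2$; hence $f=\hat\psi$ for a suitable $\psi\in\mathsf{FORM}_m$, which is the first claim.

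For the third claim I would observe that functional completeness identifies the free RM-algebra on $m$ generators with the algebra of \emph{all} functions $\mathfrak Z^m\to\mathfrak Z$ under the pointwise operations of $\mathfrak Z_{\rm RM}$ (for finite $m$ the cube $\mathfrak Z^m$ is discrete, so the word ``continuous'' in Theorem~\ref{theorem:cubic-representation}(c) imposes no restriction). This algebra is finite with $|\mathfrak Z|^{|\mathfrak Z^m|}=3^{(3^m)}$ elements, and by Theorem~\ref{theorem:cubic-representation}(b) a finite RM-algebra of $3^n$ elements is isomorphic to $\mathcal F_n$; taking $n=3^m$ gives $\mathsf{FORM}_m/\negthickspace\equiv_\Diamond\cong\mathcal F_{3^m}$. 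The one step demanding real care is the normal-form construction above—confirming that the indicators behave as claimed and that the join-of-meets reconstructs $f$ on the nose; everything else is a direct appeal to Propositions~\ref{proposition:domenica} and~\ref{proposition:term-eq} and to Theorem~\ref{theorem:cubic-representation}(b).
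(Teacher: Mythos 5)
Your proposal is correct, but it handles the one substantive claim differently from the paper. The paper presents this proposition as an immediate corollary: it obtains functional completeness by specializing Theorem~\ref{theorem:cubic-representation}(c) to $\kappa=m$ finite (where continuity on the discrete cube $\mathfrak Z^m$ is vacuous, so the free RM-algebra on $m$ generators is literally \emph{all} of $\mathfrak Z^{\mathfrak Z^m}$), with that theorem itself resting on the Balbes--Dwinger description of free Post algebras via Theorem~\ref{theorem:post-saga}(ii) and the term-equivalence of Theorem~\ref{theorem:post=cubic}. You instead prove functional completeness from scratch by a disjunctive-normal-form construction with the point indicators $\neg\nabla x$, $\nabla x\wedge\nabla\neg x$, $\Delta x$; I checked that these take the values you claim and that the join-of-meets reconstructs $f$ pointwise (the inner meet is $\{0,1\}$-valued and the outer join is $\max$ against the bottom element $0$), so the argument is sound. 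The remaining steps --- freeness via Proposition~\ref{proposition:domenica} and the identification with $\mathcal F_{3^m}$ by counting $3^{3^m}$ elements and invoking Theorem~\ref{theorem:cubic-representation}(b) --- coincide with the paper's. What your route buys is a self-contained, verifiable proof of the key claim that does not lean on the imported structure theory of Post algebras; what it costs is redundancy, since the normal-form fact is already packaged inside Theorem~\ref{theorem:cubic-representation}(c), which is all the paper cites.
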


\medskip
Following Rota and Metropolis
(see (ii) in the Introduction), 
the  partial binary operation $\sqcap\subseteq \mathfrak Z\times
\mathfrak Z$ is defined by 

\medskip

\begin{center}
\begin{tabular}{|c|c|c|c|}
\hline
$x\sqcap y$&0&$1/2$&1\\
\hline
0&0&$0$&$\mbox{\tiny undefined}$\\
\hline
$1/2$&$0$&$1/2$&$1$\\
\hline
1&$\mbox{\tiny undefined}$&$1$&1\\
\hline
\end{tabular}
\end{center}

\bigskip 
 The following result links  the consequence
 relation  $\models_\Diamond$
with the natural inclusion order between
the  faces of the $3^m$-cube:

\begin{proposition} 
\label{inclusion-finite}
  Suppose  $\Theta\subseteq \mathsf{FORM}_m$  is
compatible and $\phi\in \mathsf{FORM}_m.$   

\begin{itemize}
\item[(i)]
$\{\theta_1,\ldots,\theta_k\}\models_\Diamond\phi
\,\,\,\mbox{ iff }\,\,\,
(\hat\theta_1\sqcap\ldots\sqcap \hat\theta_k) \sqcup \hat\phi=\hat\phi,$
with the pointwise operations  $\sqcup$ and $\sqcap$
on $\mathfrak Z$.

\medskip
\item[(ii)]
In particular,  writing
${ \mathsf{FORM}_m}/{\equiv_\Diamond}
=
\{\hat\phi\mid\phi\in  \mathsf{FORM}_m\}
= \mathcal F_{3^m},
$
it follows that
\begin{equation}
\label{consequence=inclusion}
\theta \models_\Diamond \phi
\,\,\mbox{ iff }\,\,
\hat\theta \sqsubseteq \hat\phi
\,\,\,\mbox{ iff }\,\,\,
\hat\theta \sqcup\hat\phi= \hat\phi
\end{equation}
and hence,
\begin{equation}
\label{consequence=identity}
\hat\theta=\hat\phi
\,\,\,\mbox{ iff }\,\,\,
\theta\models_\Diamond \phi
\mbox{ and }
 \phi\models_\Diamond \theta.
 \end{equation} 
 \end{itemize}
\end{proposition}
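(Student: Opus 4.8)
The plan is to push everything down to a pointwise computation on the three-element set $\mathfrak Z$. First I would invoke Proposition~\ref{proposition:logical-representation} to identify $\mathsf{FORM}_m/\!\equiv_\Diamond$ with the RM-algebra $\mathcal F_{3^m}$ of \emph{all} functions $\mathfrak Z^m\to\mathfrak Z$, so that each formula is replaced by its associated function and the operations $\sqcup,\sqcap$ act coordinatewise. Fixing a valuation $v$, I abbreviate $t_i=\hat\theta_i(v)$ and $p=\hat\phi(v)$. Since $\Theta$ is compatible and $\{\theta_1,\ldots,\theta_k\}\subseteq\Theta$, the relevant clause of Definition~\ref{definition:culmine} is (\ref{equation:consequence}): the consequence $\{\theta_1,\ldots,\theta_k\}\models_\Diamond\phi$ holds iff for every $v$ there is $\theta\in\{\theta_1,\ldots,\theta_k\}\cup\{1/2\}$ with $\hat\theta(v)\sqsubseteq p$. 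Reading off the $\sqcup$-table, $a\sqsubseteq b$ (that is, $a\sqcup b=b$) holds precisely when $b=1/2$ or $a=b$; in particular $1/2\sqsubseteq p$ holds exactly when $p=1/2$. Thus the pointwise condition simplifies to: either $p=1/2$, or $p\in\{0,1\}$ and $t_i=p$ for some $i$.

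For part (i) the task is to match this against the single identity $t\sqcup p=p$, where $t=t_1\sqcap\cdots\sqcap t_k$. Here I would use compatibility: since no pair $\theta_i,\theta_j$ clashes at $v$, the values $t_1,\ldots,t_k$ never contain both $0$ and $1$, so the iterated partial meet is everywhere defined, and the $\sqcap$-table gives $t=0$ iff some $t_i=0$, $t=1$ iff some $t_i=1$, and $t=1/2$ otherwise. A three-line case split on $p$ then closes the argument: for $p=1/2$ both sides hold automatically, since $t\sqcup 1/2=1/2$; for $p=0$ both sides assert that some $t_i=0$; for $p=1$ both sides assert that some $t_i=1$. This establishes (i) at each $v$, hence globally.

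For part (ii) I would specialize (i) to $k=1$, obtaining $\theta\models_\Diamond\phi$ iff $\hat\theta\sqcup\hat\phi=\hat\phi$, which is exactly $\hat\theta\sqsubseteq\hat\phi$ by the definition of $\sqsubseteq$ in Proposition~\ref{proposition:inclusion}; this is (\ref{consequence=inclusion}). Finally (\ref{consequence=identity}) follows from antisymmetry of $\sqsubseteq$: commutativity of $\sqcup$ turns $\hat\theta\sqcup\hat\phi=\hat\phi$ together with $\hat\phi\sqcup\hat\theta=\hat\theta$ into $\hat\theta=\hat\phi$ (equivalently, $\sqsubseteq\,=\,\preceq$ is a partial order by Theorem~\ref{theorem:deluca}(i)). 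Rather than a genuine obstacle, the one point demanding care is the bookkeeping of the auxiliary element $1/2\in\Theta\cup\{1/2\}$: it is exactly what absorbs the case $p=1/2$, where the meet $t$ is otherwise unconstrained but $t\sqcup 1/2=1/2$ holds regardless — so omitting it would break the equivalence on the whole $1/2$-fibre of $\hat\phi$.
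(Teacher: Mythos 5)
Your argument is correct and is essentially the paper's own proof: the paper merely states that the result follows from a ``tedious pointwise verification using Proposition~\ref{proposition:logical-representation}'', and your case analysis on $p=\hat\phi(v)\in\{0,1/2,1\}$, together with the observation that compatibility makes the iterated $\sqcap$ defined and equal to $0$ (resp.\ $1$) exactly when some $t_i$ is, is precisely that verification carried out. The remark about the role of the auxiliary element $1/2\in\Theta\cup\{1/2\}$ on the fibre $\hat\phi^{-1}(1/2)$ is accurate and worth keeping.
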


\begin{proof}
The proof amounts to a tedious pointwise
verification  using  Proposition
\ref{proposition:logical-representation}.
\end{proof}

\medskip
 \subsection*{Complexity-theoretic issues in RM-logic}
 Mimicking (\ref{post-from-cubic})-(\ref{equation:superimpose-post}),
the  derived connectives $\neg,\nabla,\vee$
 are now introduced by stipulating that for
all  formulas  $\phi$ and $\psi $,
 \begin{equation}
 \label{abbreviation}
 \neg\phi,\,\,\,\, \nabla\phi, \,\,\,\,\phi \vee \psi
\,\, \mbox{  respectively stand for\,\, } \partial(1/2,\phi),
\,\,\, \partial(\phi,0),\,\,\,\neg(\neg\phi\wedge \neg \psi).
 \end{equation}
 The notations
 $
 \neg\hat\phi,\,\,\, \nabla\hat\phi, \,\,\,\hat\phi\vee\hat\psi
 $
 are self-explanatory in the light of Theorem 
 \ref{theorem:post=cubic}
 and Proposition \ref{proposition:logical-representation}.


\begin{proposition}
\label{proposition:one-axiom}
It is decidable whether
$\Theta=\{\theta_1,\ldots,\theta_k\}\subseteq
\mathsf{FORM}_m$ is  incompatible.
Further, there is a Turing machine which,
having in its input a compatible set  
$\Theta=\{\theta_1,\ldots,\theta_k\}\subseteq
\mathsf{FORM}_m$,
outputs a formula  $\omega\in \mathsf{FORM}_m$
such that
$
\hat\omega=\hat\theta_i\sqcap\ldots\sqcap\hat\theta_k.
$
\end{proposition}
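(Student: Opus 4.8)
The plan is to exploit throughout that, since $\mathcal X=\{X_1,\dots,X_m\}$ is finite, the domain $\mathfrak Z^m$ of every $\hat\phi$ consists of only $3^m$ points, and that for each fixed $v\in\mathfrak Z^m$ the value $\hat\phi(v)$ is computed by a straightforward recursion on the structure of $\phi$ using the three truth tables for $\sqcup,\partial,\wedge$. Thus the map $(\phi,v)\mapsto\hat\phi(v)$ is Turing-computable, and this single observation drives both assertions.

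For the first assertion I would let the machine run through all $3^m$ valuations $v$ and all pairs $\theta_i,\theta_j\in\Theta$, evaluate $\hat\theta_i(v)$ and $\hat\theta_j(v)$, and test the defining condition of incompatibility, namely $\hat\theta_i(v)=1-\hat\theta_j(v)$ (equivalently, that $\hat\theta_i(v),\hat\theta_j(v)$ form the incompatible pair $\{0,1\}$ of $\mathfrak Z$). Since this is a finite search whose outcome decides whether $\Theta$ is incompatible, decidability is immediate.

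For the second assertion I would construct $\omega$ explicitly rather than search for it. First I would record that compatibility of $\Theta$ says precisely that for every $v$ the values $\hat\theta_1(v),\dots,\hat\theta_k(v)$ never include both $0$ and $1$; hence at each $v$ they all lie in $\{0,1/2\}$ or all lie in $\{1/2,1\}$, so the partial operation $\sqcap$ is defined on them and $\hat\theta_1(v)\sqcap\cdots\sqcap\hat\theta_k(v)$ is a genuine element of $\mathfrak Z$. Next I would invoke Proposition~\ref{proposition:min-is-sqcap}: formula~(\ref{equation:cap-curly}) exhibits a fixed binary term $t(x,y)$ in the derived operations $\neg,\nabla,\vee,\wedge$ (which are themselves rewritten in $\sqcup,\partial,\wedge$ via~(\ref{abbreviation}) and Proposition~\ref{proposition:term-eq}) that computes $x\sqcap y$ whenever $x,y$ are compatible. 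Iterating $t$, I would take $\omega$ to be the nested formula built from $t(t(\cdots t(\theta_1,\theta_2)\cdots,\theta_{k-1}),\theta_k)$, a string the machine writes mechanically.

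The step needing genuine care -- and which I regard as the main obstacle -- is verifying that this nested substitution is legitimate at every intermediate stage, i.e. that $t$ is always applied to compatible arguments so that each application really returns the pointwise meet. This is exactly where compatibility of $\Theta$ does its work: at a fixed $v$ all the values sit on one side of $1/2$, and the meet of values on a single side of $1/2$ stays on that side, so each partial meet remains compatible with the next premise. Granting this, an easy induction on $k$ yields $\hat\omega=\hat\theta_1\sqcap\cdots\sqcap\hat\theta_k$ pointwise on all of $\mathfrak Z^m$, as required. Should one wish to avoid the explicit term, one could instead note that the target function $v\mapsto\hat\theta_1(v)\sqcap\cdots\sqcap\hat\theta_k(v)$ is representable by Proposition~\ref{proposition:logical-representation}, and, because formula evaluation over the finite domain $\mathfrak Z^m$ is computable, recover $\omega$ by enumerating formulas and testing each against the tabulated target; I prefer the nested-term construction only because it is manifestly effective and search-free.
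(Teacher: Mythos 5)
Your proposal is correct and follows essentially the same route as the paper: the paper's own proof also reduces to $k=2$ and takes $\omega$ to be exactly the term of Proposition~\ref{proposition:min-is-sqcap}, i.e.\ formula (\ref{equation:cap-curly}) rewritten via (\ref{equation:superimpose-post}) and (\ref{abbreviation}), iterated for larger $k$. Your extra care about intermediate compatibility in the nested term (all values at a fixed $v$ lie on one side of $1/2$, and meets stay on that side) and your brute-force treatment of the decidability claim fill in details the paper leaves implicit, but introduce no new method.
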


\begin{proof}  We only prove the second statement.
It suffices to assume $k=2.$
Let $\omega$ be the formula
\begin{equation}
\label{equation:sabato}
\neg\nabla(\neg\theta_1\wedge\neg\theta_2) \vee
\left(1/2\wedge \nabla(\theta_1\wedge \neg \theta_1)    \wedge   \nabla(\theta_2\wedge \neg \theta_2) \right).
\end{equation}
Then using (\ref{equation:superimpose-post})
and (\ref{abbreviation}) one verifies  
$\hat\omega= \hat\theta_1\sqcap\hat\theta_2$.
\end{proof}


The following result reduces consequence to tautology
in RM-logic
(notation of (\ref{abbreviation})):

\begin{proposition}
\label{proposition:consequence-to-tautology} 
Let $\alpha,\beta\in \mathsf{FORM}_m.$ Then  
$\alpha\models_\Diamond\beta$
iff
$
 \models_\Diamond
(\beta\sqcup \nabla\neg \alpha)\wedge
(\beta\sqcup \neg\nabla \alpha)\wedge
\neg(\beta\sqcup \nabla\neg \alpha)\wedge
\neg(\beta\sqcup \neg\nabla \alpha).
$

Thus,  
 $\hat\alpha=\hat\beta$
iff
$
 \models_\Diamond
(\beta\sqcup \nabla\neg \alpha)\wedge
(\beta\sqcup \neg\nabla \alpha)\wedge
\neg(\beta\sqcup \nabla\neg \alpha)\wedge
\neg(\beta\sqcup \neg\nabla \alpha)
$
$
\wedge
(\alpha\sqcup \nabla\neg \beta)\wedge
(\alpha\sqcup \neg\nabla \beta)\wedge
\neg(\alpha\sqcup \nabla\neg \beta)\wedge
\neg(\alpha\sqcup \neg\nabla \beta).
$
 \end{proposition}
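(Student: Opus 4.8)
The plan is to reduce the single-premise consequence $\alpha\models_\Diamond\beta$ to the inclusion order $\hat\alpha\sqsubseteq\hat\beta$ and then to recognize the displayed conjunction as a mere repackaging of the two conditions already isolated in Proposition \ref{proposition:inclusion}(iii). Throughout I identify $\mathsf{FORM}_m/\negthickspace\equiv_\Diamond$ with the RM-algebra of all functions $\mathfrak Z^m\to\mathfrak Z$ via Proposition \ref{proposition:logical-representation} and argue pointwise at each $v\in\mathfrak Z^m$. The singleton $\{\alpha\}$ is compatible (take $c=\hat\alpha$ in Definition \ref{definition:compatible}, using $\hat\alpha\sqcup\hat\alpha=\hat\alpha$), so (\ref{consequence=inclusion}) gives $\alpha\models_\Diamond\beta$ iff $\hat\alpha\sqsubseteq\hat\beta$.

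First I would feed $a=\hat\alpha$, $b=\hat\beta$ into Proposition \ref{proposition:inclusion}(iii): $\hat\alpha\sqsubseteq\hat\beta$ holds iff the two algebra elements $\neg\partial(\hat\alpha,0)\sqcup\hat\beta$ and $\neg\partial(\hat\alpha,1)\sqcup\hat\beta$ are both the constant $1/2$. The next task is to match these with the subformulas in the statement. Reading off the $\partial$-table gives $\partial(x,0)=\nabla x$ and $\partial(x,1)=\Delta x$, while Proposition \ref{proposition:term-eq}(i) supplies $\Delta x=\neg\nabla\neg x$, whence $\neg\partial(x,1)=\neg\Delta x=\nabla\neg x$ and $\neg\partial(x,0)=\neg\nabla x$. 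Using commutativity of $\sqcup$, the two elements above become exactly $\hat P:=\hat\beta\sqcup\nabla\neg\hat\alpha$ and $\hat Q:=\hat\beta\sqcup\neg\nabla\hat\alpha$, where $P$ and $Q$ are the two subformulas occurring in the displayed formula. Thus $\alpha\models_\Diamond\beta$ iff $\hat P\equiv 1/2$ and $\hat Q\equiv 1/2$.

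It then remains to connect this with tautologyhood of $\gamma:=P\wedge Q\wedge\neg P\wedge\neg Q$. By Definition \ref{definition:culmine}, $\models_\Diamond\gamma$ means $\hat\gamma\equiv 1/2$. Working pointwise with $\wedge=\min$ and $\neg t=1-t$, I would note that for $t\in\mathfrak Z$ one has $\min(t,1-t)=1/2$ precisely when $t=1/2$, and $\min(t,1-t)=0$ otherwise; consequently $\hat\gamma(v)=\min\bigl(\min(\hat P(v),1-\hat P(v)),\,\min(\hat Q(v),1-\hat Q(v))\bigr)$ equals $1/2$ iff $\hat P(v)=\hat Q(v)=1/2$, and equals $0$ otherwise. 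Hence $\hat\gamma\equiv 1/2$ iff $\hat P\equiv 1/2$ and $\hat Q\equiv 1/2$, which together with the previous paragraph yields the first equivalence.

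Finally, the ``thus'' clause follows from (\ref{consequence=identity}): $\hat\alpha=\hat\beta$ iff both $\alpha\models_\Diamond\beta$ and $\beta\models_\Diamond\alpha$. Applying the first part to each, and observing that $\hat\gamma$ and its $\alpha$-$\beta$-swapped analogue take values only in $\{0,1/2\}$ (each is $\le\min(t,1-t)\le 1/2$), their $\wedge$ is constantly $1/2$ iff each factor is; so the eight-fold conjunction is a tautology iff both four-fold conjunctions are. The only point needing care is the connective bookkeeping in the second paragraph---keeping straight that $\nabla\neg\alpha$ corresponds to $\neg\partial(\alpha,1)$ rather than to $\neg\partial(\alpha,0)$, and invoking commutativity of $\sqcup$. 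Given Proposition \ref{proposition:inclusion}(iii) there is no genuine obstacle; a reader wanting self-containedness could instead verify both equivalences by the nine-case truth table over $\mathfrak Z\times\mathfrak Z$, checking that the forbidden pairs $(\hat\alpha(v),\hat\beta(v))$ coincide on the two sides.
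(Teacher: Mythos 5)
Your proof is correct and follows essentially the same route as the paper's, which simply cites (\ref{consequence=identity}) together with Propositions \ref{proposition:inclusion} and \ref{proposition:two-tautologies}; you have merely unpacked those citations, matching the two terms of Proposition \ref{proposition:inclusion}(iii) with the subformulas $\beta\sqcup\nabla\neg\alpha$ and $\beta\sqcup\neg\nabla\alpha$ and re-deriving Proposition \ref{proposition:two-tautologies} pointwise. The connective bookkeeping ($\neg\partial(a,1)=\nabla\neg a$, $\neg\partial(a,0)=\neg\nabla a$) checks out.
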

 \begin{proof}
 By  (\ref{consequence=identity}),
 together with
 Propositions
 \ref{proposition:inclusion} and \ref{proposition:two-tautologies}.
 \end{proof}

From \ref{proposition:consequence-to-tautology} 
we immediately get:


\begin{proposition}
\label{pre-deduction} There is a polynomial time reduction
of the consequence problem  $\alpha\models_\Diamond\beta$
to the tautology problem in RM-logic.
Also the converse reduction (trivially)  exists.
\end{proposition}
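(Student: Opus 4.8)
The plan is to establish both reductions, with the forward direction following directly from the preceding proposition. For the reduction of $\alpha\models_\Diamond\beta$ to the tautology problem, I would invoke Proposition \ref{proposition:consequence-to-tautology}, which already exhibits a single formula $\gamma$ (namely the explicit conjunction displayed there) with the property that $\alpha\models_\Diamond\beta$ if and only if $\models_\Diamond\gamma$. The only thing left to verify is that this transformation is computable in polynomial time. First I would observe that each of the derived connectives $\neg,\nabla,\vee$ abbreviates a fixed expression in $\sqcup,\partial,\wedge$ by (\ref{abbreviation}), so unfolding these abbreviations multiplies the length of any subformula by only a constant factor. The formula $\gamma$ is built from $\alpha$ and $\beta$ using a fixed, bounded number of occurrences of each connective (four conjuncts, each of bounded connective-depth over $\alpha$ and $\beta$), so the length of $\gamma$ is linear in $|\alpha|+|\beta|$ and $\gamma$ is produced by a straightforward syntactic substitution. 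Hence the reduction runs in time polynomial (indeed linear) in the size of the input.

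For the converse reduction, I would note that the tautology problem is the instance $\models_\Diamond\phi$, i.e.\ $\emptyset\models_\Diamond\phi$, and by the final clause of Definition \ref{definition:culmine} this is equivalent to $1/2\models_\Diamond\phi$. Thus asking whether $\phi$ is a tautology is literally an instance of the consequence problem $\theta\models_\Diamond\phi$ with the single premise $\theta$ taken to be the constant formula $1/2$. This map $\phi\mapsto(1/2\models_\Diamond\phi)$ is trivially computable in polynomial time, which is why the parenthetical remark labels the converse reduction trivial.

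I do not expect either direction to present a genuine obstacle, since the mathematical content is carried entirely by Proposition \ref{proposition:consequence-to-tautology}. The only point requiring care is the bookkeeping in the first reduction: one must confirm that expanding the shorthand connectives $\neg,\nabla,\vee$ into primitive $\sqcup,\partial,\wedge$ terms does not cause a superpolynomial blowup. Since each abbreviation expands to a fixed-size term and these expansions are not nested in a way that compounds multiplicatively (the formula $\gamma$ has bounded depth relative to $\alpha$ and $\beta$), the total size remains polynomial. With that verified, the statement follows immediately.

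\begin{proof}
The first reduction is given by Proposition \ref{proposition:consequence-to-tautology}: the map sending $(\alpha,\beta)$ to the displayed formula $\gamma$ satisfies $\alpha\models_\Diamond\beta$ iff $\models_\Diamond\gamma$. Since $\gamma$ is obtained from $\alpha$ and $\beta$ by a fixed syntactic scheme using a bounded number of connectives, and each abbreviation in (\ref{abbreviation}) expands to a term of constant size, the length of $\gamma$ is linear in $|\alpha|+|\beta|$ and $\gamma$ is computable in polynomial time. For the converse, by Definition \ref{definition:culmine} the formula $\phi$ is a tautology iff $1/2\models_\Diamond\phi$, so the tautology problem is the instance of the consequence problem with premise $1/2$; this reduction is computable in polynomial time.
\end{proof}
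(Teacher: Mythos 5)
Your proposal is correct and follows exactly the paper's route: the paper derives this proposition immediately from Proposition \ref{proposition:consequence-to-tautology}, and your only additions are the (sound) bookkeeping that the fixed syntactic scheme and the constant-size expansions of the abbreviations in (\ref{abbreviation}) keep the output linear in the input, plus the observation that tautology is the consequence instance with premise $1/2$. Nothing further is needed.
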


The problem $\alpha\models_\Diamond\beta$ is
as complicated as its boolean counterpart:

\begin{theorem}[coNP-completeness of RM-consequence]
\label{coNP-completeness}  
The problem $\alpha\models_\Diamond\gamma$ is
coNP-complete, and so is the tautology problem
$\models_\Diamond\tau$.
\end{theorem}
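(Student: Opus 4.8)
The plan is to prove coNP-completeness in two parts: membership in coNP and coNP-hardness. By Proposition \ref{pre-deduction} the consequence problem $\alpha\models_\Diamond\gamma$ and the tautology problem $\models_\Diamond\tau$ are polynomial-time interreducible, so it suffices to establish both bounds for one of them; I would work with the tautology problem and transfer the result. For \emph{membership in coNP}, I would observe that by Definition \ref{definition:culmine} (in the case $\Theta=\emptyset$), a formula $\tau\in\mathsf{FORM}_m$ fails to be a tautology precisely when there exists a valuation $v\in\mathfrak Z^m$ with $\hat\tau(v)\neq 1/2$. Such a $v$ is a short certificate: it consists of $m$ trits, and by Proposition \ref{proposition:domenica} (continuity, or just the inductive evaluation of $\hat\tau$) the value $\hat\tau(v)$ is computable in time polynomial in the length of $\tau$ by evaluating the operations $\sqcup,\partial,\wedge$ bottom-up according to their truth tables. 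Hence non-tautology is in NP, so the tautology problem is in coNP, and by Proposition \ref{pre-deduction} so is $\alpha\models_\Diamond\gamma$.

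For \emph{coNP-hardness}, I would reduce from the classical tautology problem for two-valued propositional boolean logic, which is coNP-complete. The key idea is that the boolean elements of $\mathfrak Z$ (those fixed by $\nabla$, equivalently the values in $\{0,1\}$) form a copy of the two-element boolean algebra, and by Theorem \ref{theorem:post=cubic}(iii)--(iv) together with (\ref{post-from-cubic}) the connectives $\neg,\vee,\wedge$ are all available as RM-terms via the abbreviations in (\ref{abbreviation}). Given a boolean formula $\beta$ in variables $X_1,\ldots,X_m$ built from $\neg,\vee,\wedge$, I would first force each variable to take a boolean value by substituting $\nabla X_i$ for $X_i$ (since $\nabla$ maps $\mathfrak Z$ into $\{0,1\}$ by (\ref{equation:post-operations}), and fixes $\{0,1\}$), obtaining a formula $\beta^{\nabla}$ whose value is boolean for every valuation $v\in\mathfrak Z^m$. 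The reduction must be polynomial and must translate ``$\beta$ is a boolean tautology'' into an RM-tautology statement; the subtlety is that an RM-tautology requires constant value $1/2$, not value $1$, so I cannot simply ask whether $\beta^{\nabla}$ is $\models_\Diamond$-valid.

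The bridge I would use is Proposition \ref{proposition:consequence-to-tautology}, which reduces the question ``$\hat\alpha=\hat\beta$'' to a single tautology assertion. Concretely, $\beta$ is a boolean tautology iff $\beta^{\nabla}$ evaluates to $1$ under every boolean assignment, iff $\beta^{\nabla}$ and the constant $1=\partial(1/2,0)$ agree as functions on all boolean inputs; since $\beta^{\nabla}$ already only takes boolean values on \emph{all} of $\mathfrak Z^m$, this is equivalent to $\hat{\beta^{\nabla}}=\widehat{\partial(1/2,0)}$ on $\mathfrak Z^m$. By Proposition \ref{proposition:consequence-to-tautology} this equality is expressible as $\models_\Diamond\sigma$ for an explicit formula $\sigma$ of size polynomial in $|\beta|$, namely the conjunction of the eight displayed terms with $\alpha:=\beta^{\nabla}$ and the other argument $:=\partial(1/2,0)$. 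This yields a polynomial-time many-one reduction from boolean tautology to $\models_\Diamond$, establishing coNP-hardness; transferring through Proposition \ref{pre-deduction} gives coNP-hardness of $\alpha\models_\Diamond\gamma$ as well.

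The main obstacle, and the step requiring the most care, is the hardness direction: one must verify that the $\nabla$-substitution genuinely booleanizes every input (so that quantifying over $\mathfrak Z^m$ is equivalent to quantifying only over boolean assignments, with no spurious $1/2$-valued valuations slipping through) and that the reduction remains polynomial after the translation in Proposition \ref{proposition:consequence-to-tautology} (which introduces a bounded blow-up, since each of the eight conjuncts reuses $\alpha=\beta^{\nabla}$ a constant number of times). Once these two verifications are in place, coNP-completeness of both $\alpha\models_\Diamond\gamma$ and $\models_\Diamond\tau$ follows.
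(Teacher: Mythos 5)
Your proof is correct, and its overall architecture matches the paper's: membership in coNP is immediate from the short-certificate argument, and hardness is obtained by reducing a two-valued tautology problem to $\models_\Diamond$, using Proposition \ref{pre-deduction} to transfer between consequence and tautology. The one place where you genuinely diverge is the hardness gadget. The paper routes through Post logic: it declares the Post tautology problem coNP-complete (leaving the reduction from boolean tautology as ``routine''), translates a Post formula $\beta$ into an RM-formula $\beta'$ via (\ref{post-from-cubic}), and then converts ``value $1$ everywhere'' into ``value $1/2$ everywhere'' by composing with the unary definable map $\flip(x)=\partial(x,0)\sqcup\partial(\partial(0,x),0)$, which swaps $1$ and $1/2$; the target tautology is just $\flip(\beta')$. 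You instead reduce directly from boolean tautology, booleanizing each variable by the substitution $X_i\mapsto\nabla X_i$ (which is sound, since $\nabla$ maps $\mathfrak Z$ onto $\{0,1\}$ and the derived $\neg,\vee,\wedge$ preserve $\{0,1\}$), and then invoke the eight-conjunct formula of Proposition \ref{proposition:consequence-to-tautology} to express $\hat{\beta^{\nabla}}=\widehat{\partial(1/2,0)}$ as a single tautology. Both reductions are linear-size and correct. Your version makes explicit the booleanization step that the paper leaves implicit in its ``routine'' Post-logic claim, at the cost of a bulkier target formula; the paper's $\flip$ device is the slicker way to bridge the $1$-versus-$1/2$ mismatch, and you could substitute it for your appeal to Proposition \ref{proposition:consequence-to-tautology} to shorten the argument.
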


\begin{proof}
First of all, the tautology problem
$ \models_{\rm Post}\beta$    in Post logic
is coNP-complete: to see this, after noting
that the problem is in coNP,
one routinely reduces to this problem
the boolean tautology problem. 
Second, in the light of
Propositions
\ref{proposition:two-tautologies}
and  \ref{pre-deduction}
it is sufficient to deal with the tautology
problem  $\models_\Diamond\beta.$
Trivially the problem is in coNP.
To show coNP-hardness we will reduce to it
 the tautology  problem  
 in Post logic.  
 So let $\beta=\beta(X_1,\ldots,X_m)$  be an arbitrary input formula
 in Post logic. 
 Let the formula  $\beta'$ of RM-logic
  be obtained from
 $\beta$ by application of the substitutions of (\ref{post-from-cubic}).
 Observe that the map $\beta\mapsto\beta'$
 is computable in polynomial time.
Using  Proposition \ref{proposition:domenica}
from  $\beta$ we obtain  a function
 $\widehat{\beta'}\colon \mathfrak Z^m\to \mathfrak Z$.  
Let the
  function  $\flip\colon\mathfrak Z\to \mathfrak Z$ 
be defined by 
\begin{equation}
\label{equation:flip}
\flip(x) =
\partial(x,0)\sqcup\partial(\partial(0,x),0).
\end{equation}
Then  
 $\flip(0)=0,\,\,\, \flip(1/2)=1,\,\,\,\flip(1)=1/2,$
 and by Theorem
 \ref{theorem:post=cubic}(iii).
 we can write:
\begin{eqnarray*}
 \models_{\rm Post}\beta
&\Leftrightarrow&
\forall v\in \mathfrak Z^m,  \hat\beta(v)=1
\\
&\Leftrightarrow&
\forall v\in \mathfrak Z^m,  \flip(\hat\beta(v))=1/2\\
&\Leftrightarrow&
\models_\Diamond  \partial(\beta',0)\sqcup \partial(\partial(0,\beta'),0).
\end{eqnarray*}
This yields the desired reduction.
\end{proof}

\section{Closing a circle of ideas: the simplex and the cube}
\subsection*{From the $n$-simplex to boolean logic}
As already mentioned in the Introduction,
the lattice of all faces of the $(n-1)$-simplex
 ($n=1,2,\ldots$) is isomorphic to the boolean algebra
 $\mathcal B_n$ with $2^n$ elements.  To give a logical
 formalization of   $\mathcal B_n$,  one first
  prepares $m$ variable symbols
$X_1,\ldots,X_m$, where   $m$  is usually
 much smaller than $n$:   as a matter of fact,
 $m=\ulcorner \log_2 (n+1)\urcorner$ variables suffice.
Let $\mathsf{FORM}_m$ denote the
 set of boolean formulas in the variables
 $X_1,\ldots,X_m$.  Each formula  $\phi(X_1,\ldots,X_m)$
 determines the boolean function  $\hat\phi\colon\{0,1\}^m\to
 \{0,1\}$  in the usual way. 
 In particular, for each
 $i=1,\ldots,m,$ and $m$-tuple of bits  $b=(b_1,\ldots,b_m)$
 \begin{equation}
 \label{equation:variable}
 \widehat{X_i}(b)=b_i,
 \end{equation}
 so that $ \widehat{X_i}$ is the $i$th coordinate function
 on  $\{0,1\}^m$.
 Fix $n=1,\ldots,2^m$ and suppose
 $\Theta\subseteq \mathsf{FORM}_m$ 
is  satisfied by precisely
 $n$  valuations. Let $\Mod(\Theta)\subseteq \{0,1\}^m$
  be the set of such
 satisfying evaluations.
Say that
two formulas $\alpha,\beta$  are   {\it $\Theta$-equivalent,}
and write $\alpha\equiv_\Theta\beta,$ 
 iff $\Theta\models \alpha\leftrightarrow \beta.$
 In other words,  $\hat\alpha\restrict\Mod(\Theta)=\hat\beta\restrict\Mod(\Theta),$  where,
as the reader will recall, 
the symbol $\restrict$  denotes restriction.
Let  
 \begin{equation}
 \label{equation:lindenbaum}
 \mathsf{LIND}_\Theta=
\{\phi/\equiv_\Theta\mid\phi\in \mathsf{FORM}_m\}=
\{\hat{\phi}
\restrict\Mod(\Theta)\mid\phi\in\mathsf{FORM}_m\}
\end{equation}
 be the
  {\it Lindenbaum algebra of $\Theta$ (in boolean logic)} ,  i.e.,
  the boolean algebra consisting of all 
 $\equiv_\Theta$-equivalence
 classes of formulas equipped with the operations
 naturally induced by the boolean connectives.
 Equivalently, $\mathsf{LIND}_\Theta$ is the
 boolean algebras of all boolean functions
 on $\Mod(\Theta)$ equipped with the pointwise operations
 $\min,\max$ and $1-x.$  
    \begin{proposition} 
$\mathsf{LIND}_\Theta\cong
\mathcal B_n\cong\mbox{ powerset of }\{1,\ldots,n\}\cong$
boolean algebra of
faces of the $(n-1)$-simplex.  If $\tau\in  \mathsf{FORM}_m$
 is a tautology then $\mathsf{LIND}_\tau$ is
 isomorphic to the free boolean algebra over the
 free generating set $\{X_1/\negthickspace\equiv,\ldots,X_m/\negthickspace\equiv\}$ of coordinate functions of
 $\{0,1\}^m$.
 The latter in turn is isomorphic to the boolean algebra of
 faces of the $(2^m-1)$-simplex
 $\mathcal S_{2^m-1}$  (embedded in $\mathbb R^{2^m}$).
 \end{proposition}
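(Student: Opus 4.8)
The plan is to reduce the entire statement to the single structural observation already recorded in the definition of $\mathsf{LIND}_\Theta$: that it is, as a boolean algebra, the algebra $\{0,1\}^{\Mod(\Theta)}$ of all two-valued functions on the finite $n$-element set $\Mod(\Theta)$ under the pointwise operations $\min,\max,1-x$. The only part of this observation needing verification is \emph{realizability}, namely that every function $g\colon \Mod(\Theta)\to\{0,1\}$ equals $\hat\phi\restrict\Mod(\Theta)$ for some $\phi\in\mathsf{FORM}_m$. First I would invoke functional completeness of the boolean connectives: for each bit-string $b=(b_1,\ldots,b_m)\in\Mod(\Theta)$ the conjunction $\ell_b$ of the literals $X_i$ (when $b_i=1$) and $\neg X_i$ (when $b_i=0$) satisfies $\hat\ell_b=\chi_{\{b\}}$, the indicator of $\{b\}$ on $\{0,1\}^m$; then $g=\hat\phi\restrict\Mod(\Theta)$ for $\phi=\bigvee\{\ell_b\mid g(b)=1\}$. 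Hence $\mathsf{LIND}_\Theta=\{0,1\}^{\Mod(\Theta)}$.

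Next I would assemble the first chain of isomorphisms. The map $f\mapsto f^{-1}(1)$ is a boolean isomorphism of $\{0,1\}^{\Mod(\Theta)}$ onto the powerset of $\Mod(\Theta)$. Since $|\Mod(\Theta)|=n$ and any two $n$-element sets have order-isomorphic powersets, this gives $\mathsf{LIND}_\Theta\cong\mathcal B_n\cong$ powerset of $\{1,\ldots,n\}$. The final link to the face lattice of the $(n-1)$-simplex is precisely the identification recalled in the Introduction: the $(n-1)$-simplex has $n$ vertices and its lattice of faces (empty face included) is the full powerset of its vertex set. This settles the first sentence of the proposition.

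For the tautology case, every valuation satisfies $\tau$, so $\Mod(\tau)=\{0,1\}^m$ and $n=2^m$; thus $\mathsf{LIND}_\tau$ is the algebra of \emph{all} boolean functions on $\{0,1\}^m$, with the coordinate functions $\widehat{X_1},\ldots,\widehat{X_m}$ (the classes $X_i/\negthickspace\equiv$) as distinguished elements. To recognise this as the free boolean algebra on these $m$ generators I would verify the universal property directly: given any boolean algebra $B$ and any assignment $\widehat{X_i}\mapsto b_i\in B$, the rule $\hat\phi\mapsto\phi^B(b_1,\ldots,b_m)$ (the value of $\phi$ computed in $B$) is a homomorphism extending the assignment, and it is the unique such extension since the $\widehat{X_i}$ generate. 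Finally, applying the first chain with $n=2^m$ identifies this free algebra with the powerset of a $2^m$-element set and hence with the boolean algebra of faces of the $(2^m-1)$-simplex $\mathcal S_{2^m-1}$.

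The one step that is standard but deserves a word of care is the \emph{well-definedness} of the evaluation map in the universal property: since distinct formulas may induce the same boolean function, one must check that $\hat\phi=\hat\psi$ forces $\phi^B(b_1,\ldots,b_m)=\psi^B(b_1,\ldots,b_m)$ in every $B$. This holds because equal boolean functions agree as boolean polynomials, so the biconditional $\phi\leftrightarrow\psi$ is derivable and therefore valid in all boolean algebras. Apart from this, the proof is pure bookkeeping with the cardinality $n=|\Mod(\Theta)|$, which is why it requires no new ideas beyond functional completeness and the standard free-boolean-algebra construction.
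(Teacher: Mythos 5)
Your proposal is correct. Note that the paper offers no proof of this proposition at all: it is stated as a recollection of classical facts about boolean logic (the surrounding text has already asserted that $\mathsf{LIND}_\Theta$ ``is the boolean algebra of all boolean functions on $\Mod(\Theta)$''), so there is no authorial argument to compare against. Your write-up supplies exactly the standard verification one would expect — functional completeness (via the disjunctive normal form built from the indicators $\ell_b$) to show every function on $\Mod(\Theta)$ is realized, the $f\mapsto f^{-1}(1)$ identification with the powerset and hence with the face lattice of the $(n-1)$-simplex, and the universal property (with the well-definedness check, which reduces to completeness of classical propositional logic) for the freeness claim when $\tau$ is a tautology — and all steps are sound.
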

 %
 %
 Table 1  summarizes the relationship between boolean
 logic and face lattices of simplexes.

 \begin{table}[ht]
    \label{tav:vocabolario}
    \footnotesize{
\begin{center}
\begin{tabular}{|r|l|}
\hline
{\sc boolean logic in $m$-variables} 
& \sc {$(2^m-1)$-simplex  $\mathcal S_{2^m-1}$}\\ 
\hline
\hline
the set of valuations $\{0,1\}^{\{X_1,\ldots,X_m\}}$  
& vertices of  the $(2^m-1)$-simplex  $\mathcal S_{2^m-1}$ \\ 
\hline
${\phi}/{\equiv},$ for $\phi$  a formula
in $m$ variables 
&
face of   $\mathcal S_{2^m-1}$\\
\hline
${\tau}/{\equiv},$ for $\tau$  a tautology
in $m$ variables 
&
   $\mathcal S_{2^m-1}$, the largest face\\
\hline
$X_i/\negthickspace\equiv,$ for $i=1,\ldots,m$   
&
the face of $\mathcal S_{2^m-1}$ given by
 the  vertices in $\hat{X_i}^{-1}(1)$\\
\hline
${\neg \phi}/{\equiv}$
& complementary face\\
\hline
${(\phi\vee\psi)}/{\equiv}, \,\,\, {(\phi\wedge\psi)}/{\equiv}$
& union, intersection of two faces\\ 
\hline
${(\phi\wedge\neg \phi)}/{\equiv}$
& the empty face  \\ 
\hline
free $m$-generator boolean algebra  
&
boolean algebra of faces of   $\mathcal S_{2^m-1}$\\
   \hline
$\alpha\models \beta$
&
$\alpha/\negthickspace\equiv\,\,\,\subseteq\,\,\, \beta/\negthickspace\equiv$\\
\hline
$ \theta\in \mathsf{FORM}_m$
satisfied by  $n$ valuations
&
$(n-1)$-simplex $S_\theta\subseteq
\mathcal S_{2^m-1}$\\
\hline
$\mathsf{LIND}_\theta$ 
&
boolean algebra of faces of $S_\theta$\\
\hline
valuation  satisfying $\theta$ 
& vertex  of
$S_\theta$\\ 
\hline 
$\psi/\negthickspace\equiv_\theta$   
& a face of $S_\theta$  \\  
\hline
			\end{tabular}
		\end{center}
		}
		\medskip
\caption{Boolean logic on the faces of the simplex.}
	    \end{table}

\subsection*{From the $n$-cube to RM-logic}
As explained in the Introduction,
Rota and Metropolis \cite{rotmet} envisaged cubic algebras
as the algebras of the three-valued counterpart of
boolean logic arising from the set
$\mathcal F_n$ of  nonempty faces of the
 $n$-cube  ($n=1,2,\ldots$).  
 To write down these faces,
 $m=\ulcorner \log_3 (n+1)\urcorner$ variables
 are  sufficient.  
 
 As in the case of boolean
 logic, it is convenient to define
 Lindenbaum algebras
for any nonempty (possibly uncountable) set
$\mathcal X $ of variables,
and any compatible set  
$\Theta\subseteq \mathsf{FORM}_\mathcal X$
 of formulas.
To this purpose, proceeding by analogy with the
boolean case, and recalling that $0,1\preceq 1/2$ in the
sharpening order,  we let  the compact set
$\Mod(\Theta)\subseteq \{0,1/2,1\}^\mathcal X$ be defined by
\begin{equation}
\label{equation:acca}
\Mod(\Theta)=\bigcap\{\hat\theta^{-1}({1}/{2})\mid\theta\in\Theta
\cup\{1/2\}\}.
\end{equation}
This definition is reminiscent of Definition  
 \ref{definition:culmine}, where it is stipulated that
  $\Theta$ has the same consequences as $\Theta\cup\{1/2\}$.

As in (\ref{equation:lindenbaum}), the
{\it Lindenbaum algebra (in RM-logic)}  $\mathsf{LIND}_\Theta$
 is now
defined as the quotient of 
$\mathsf{FORM}_\mathcal X$ by the   relation
$
\phi\equiv_\Theta \psi \Leftrightarrow
\hat\phi\restrict \Mod(\Theta) = \hat\psi\restrict \Mod(\Theta),$
with the RM-operations naturally induced by the connectives.
When $\Theta=\{\theta\}$  we write
$\mathsf{LIND}_\theta$ instead of $\mathsf{LIND}_{\{\theta\}}$.
When $\Theta=\emptyset$,  $\Mod(\Theta)=\Mod_{1/2}=\{0,1/2,1\}^\mathcal X.$

\begin{proposition}
If  $\Theta=\emptyset$ then $\mathsf{LIND}_\Theta=
\mathsf{LIND}_{1/2}=\mathsf{FORM}_\mathcal X/\equiv_\Diamond.$
If  $\,\,\Theta$ is finite,   say  $\Theta=
\{\theta_1,\ldots,\theta_h\}\subseteq \mathsf{FORM}_m,\,\,\,$
 $\Mod(\Theta)$  is a subset of $\{0,1/2,1\}^m$.
 If   $\Mod(\Theta)$ 
  has $n\geq 1$  elements,
  $\mathsf{LIND}_\Theta$ is isomorphic to the RM-algebra
  $\mathcal F_n$  of the $n$-cube.
 $\Mod(\Theta)$ is empty precisely when 
 the face  $\sqcap_i\hat\theta_i$ is
 a vertex of the $3^m$-cube.
\end{proposition}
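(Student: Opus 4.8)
The plan is to read off all four assertions directly from the definition (\ref{equation:acca}) of $\Mod(\Theta)$, from the representation of $\mathsf{FORM}_m/\negthickspace\equiv_\Diamond$ supplied by Proposition \ref{proposition:logical-representation}, and from the Rota--Metropolis coordinatization of faces recalled in the Introduction. None of the four parts requires new machinery; the work is in matching up these three descriptions.

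First I would dispose of the case $\Theta=\emptyset$. Since $\widehat{1/2}$ is the constant function $1/2$ on $\mathfrak Z^{\mathcal X}$, we have $\widehat{1/2}^{-1}(1/2)=\mathfrak Z^{\mathcal X}$, whence (\ref{equation:acca}) gives $\Mod(\emptyset)=\Mod(\{1/2\})=\mathfrak Z^{\mathcal X}$. Restriction to the whole space is the identity, so the relation $\phi\equiv_\Theta\psi$ collapses to $\hat\phi=\hat\psi$, i.e.\ to $\equiv_\Diamond$; this yields the chain $\mathsf{LIND}_\emptyset=\mathsf{LIND}_{1/2}=\mathsf{FORM}_{\mathcal X}/\negthickspace\equiv_\Diamond$. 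For finite $\Theta=\{\theta_1,\dots,\theta_h\}\subseteq\mathsf{FORM}_m$ each $\hat\theta_i$ is a function $\mathfrak Z^m\to\mathfrak Z$, so $\Mod(\Theta)=\bigcap_i\hat\theta_i^{-1}(1/2)$ is visibly a subset of $\{0,1/2,1\}^m$.

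Next, for the isomorphism $\mathsf{LIND}_\Theta\cong\mathcal F_n$ when $\Mod(\Theta)$ has $n\geq 1$ elements, I would show that the restriction map is onto the full function algebra. By Proposition \ref{proposition:logical-representation} every $g\colon\mathfrak Z^m\to\mathfrak Z$ equals some $\hat\phi$; given an arbitrary $f\colon\Mod(\Theta)\to\mathfrak Z$, I extend it to $\mathfrak Z^m$ (say by $1/2$ off $\Mod(\Theta)$) and realize the extension as some $\hat\phi$, so that $f=\hat\phi\restrict\Mod(\Theta)$. Hence $\mathsf{LIND}_\Theta=\{\hat\phi\restrict\Mod(\Theta)\mid\phi\in\mathsf{FORM}_m\}$ is the RM-algebra $\mathfrak Z^{\Mod(\Theta)}$ of all $\mathfrak Z$-valued functions on the $n$-element set $\Mod(\Theta)$, under the pointwise operations. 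Since $\Mod(\Theta)$ has $n$ elements this is $\mathfrak Z^n$, which by Theorem \ref{theorem:cubic-representation}(b) is exactly $\mathcal F_n$.

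Finally, for the emptiness criterion I would pass to the Rota--Metropolis coordinates $(A_0,A_1)=(f^{-1}(0),f^{-1}(1))$. Viewing each $\hat\theta_i$ as a face of the $3^m$-cube in $\mathcal F_{3^m}$, its free coordinates (those receiving value $1/2$) form exactly $\hat\theta_i^{-1}(1/2)$. The compatibility hypothesis on $\Theta$ means $A_0^i\cap A_1^j=\emptyset$ for all $i,j$, which is precisely the condition for the $h$-fold intersection $\sqcap_i\hat\theta_i$ to be defined; by the rule $(A_0,A_1)\sqcap(B_0,B_1)=(A_0\cup B_0,A_1\cup B_1)$ its set of free coordinates is $\bigcap_i\hat\theta_i^{-1}(1/2)=\Mod(\Theta)$. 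A face is a vertex (a $0$-dimensional face) exactly when it has no free coordinates, so $\sqcap_i\hat\theta_i$ is a vertex iff $\Mod(\Theta)=\emptyset$. The one point demanding care---the main obstacle---is this last step: I must confirm that compatibility of $\Theta$ (which is only stated pairwise) really guarantees $\sqcap_i\hat\theta_i$ is defined as a single face, and that ``vertex'' is correctly matched with ``empty free-coordinate set.'' Both follow cleanly from the coordinate computation above, since pairwise $A_0^i\cap A_1^j=\emptyset$ already forces $\bigcup_iA_0^i$ and $\bigcup_iA_1^i$ to be disjoint.
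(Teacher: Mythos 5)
Your proof is correct and follows the route the paper intends: the paper's own ``proof'' is merely the citation ``From Proposition \ref{proposition:domenica}'', and your argument supplies exactly the details that citation is meant to cover --- reading $\Mod(\Theta)$ off (\ref{equation:acca}), identifying $\mathsf{LIND}_\Theta$ with the full function algebra $\mathfrak Z^{\Mod(\Theta)}\cong\mathcal F_n$ via the surjectivity in Proposition \ref{proposition:logical-representation}, and translating the emptiness criterion into Rota--Metropolis coordinates. The only point worth noting is that you (correctly) interpret compatibility of $\Theta$ as pairwise intersection of the faces $\hat\theta_i$, which is the reading the paper itself endorses when it matches its incompatibility condition with Definition \ref{definition:compatible}; your observation that pairwise disjointness of the $A_0^i$ from the $A_1^j$ already makes the $h$-fold $\sqcap$ well defined closes the one gap a careless reader might leave.
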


\begin{proof}  From Proposition
\ref{proposition:domenica}.
\end{proof}


For completeness, in  case
$\sqcap_i\hat\theta_i$ is
 a vertex of the $3^m$-cube, we
  stipulate that   $\mathsf{LIND}_\Theta$ is the
 {\it trivial}  RM-algebra with one element  $0=1/2=1,$ alias
 the 0-cube,
 corresponding to the trivial Post algebra.

\medskip

 Table 2 sums up the machinery of RM-logic over
 finitely many variables.  
 
%
 \begin{table}[ht]
 \label{tav:vocabolario-cubico}
 \footnotesize{
\begin{center}
\begin{tabular}{|r|l|}
\hline
{\sc RM-logic} 
& \sc {faces of the cube}\\ 
\hline
\hline
the $3^m$  valuations $\{0,1/2,1\}^{\{X_1,\ldots,X_m\}}$  
& the $3^m$ dimensions
of   $3^m$-cube $\mathcal F_{3^m}$ \\ 
\hline
${\phi}/{\equiv_\Diamond},$  for $\phi$  a formula
in $m$ variables 
&
a face of   cube $\mathcal F_{3^m}$
(among $3^{3^m}$ faces)\\
\hline
${\tau}/{\equiv_\Diamond},$  for $\tau$  a tautology
in $m$ variables 
&
largest face of   cube $\mathcal F_{3^m}$, i.e., 
$\mathcal F_{3^m}$ itself\\
\hline
${\phi}/{\equiv_\Diamond},$  where  $\hat\phi(v)
\in\{0,1\}\,\,\,\forall v\in \{0,1/2,1\}^m$  
&
vertex of  cube $\mathcal F_{3^m}$\\
\hline
$X_i/\negthickspace\equiv_\Diamond,$ for $i=1,\ldots,m$   
&
 $i$th coordinate function on $\{0,1/2,1\}^{m}$\\
\hline
$\mathsf{FORM}_m/\negthickspace\equiv_\Diamond$ {\footnotesize  free $m$-generator RM-algebra}
&
{RM-algebra of faces of  $3^m$-cube}
 $\mathcal F_{3^m}$\\
 \hline  
$\theta\models_\Diamond  \psi$   
&  
${\theta}/{\equiv_\Diamond}\mbox{ is a subface of }{\psi}/{\equiv_\Diamond}$
in the $3^m$-cube  \\ 
 \hline
$ ({\phi\sqcup\psi})/{\equiv_\Diamond}$
& smallest face containing  $ {\phi}/{\equiv_\Diamond}$ and 
$ {\psi}/{\equiv_\Diamond}$\\
\hline
$ {\partial(\psi, \phi)}/{\equiv_\Diamond}$
&  the antipodal   of $ {\phi}/{\equiv_\Diamond}$  in 
$ ({\psi\sqcup \phi})/{\equiv_\Diamond}$\\ 
\hline 
${(\phi\wedge \phi)}/{\equiv_\Diamond}$
& the face ${\phi}/{\equiv_\Diamond}\wedge {\psi}/{\equiv_\Diamond}$
 \\ 
\hline
$\theta\in \mathsf{FORM}_m$ such that
$\Mod(\theta)$
 has   $n$ elements
&
 $n$-cube  $\mathcal C_\theta=\theta/\negthickspace\equiv_\Diamond$
 as a face  of the $3^m$-cube
   \\
    \hline
$\mathsf{LIND}_\theta$
&
RM-algebra of faces of $n$-cube
 \\
     \hline
$\phi/\negthickspace\equiv_\theta$
&
a face of the $n$-cube
 \\
\hline
$\theta\in \mathsf{FORM}_m$
such that $\hat\theta^{-1}(1/2)=\emptyset$  
&
 $\theta/\negthickspace\equiv_\theta$ is a vertex of the $3^m$-cube
 \\
    \hline
 \end{tabular}
\end{center}
}
		\medskip
\caption{The $n$-cube and its RM-logic.}
	    \end{table}
 %
\section{Final remarks and problems}
Intuitively, the formula $\neg\phi$ in RM-logic
means
 ``$\phi$, the other way round'', in 
 accordance  with Ramsey's view of  
$\neg\phi$  as the result of writing $\phi$  
upside down, \cite{ram}.
It follows that 
the consequence relation $\models_\Diamond$
of RM-logic has  a (limited)
 consistency tolerance  property, which Post logic
 does not have:

\begin{satz}  The pair  $\{\phi,\neg\phi\}$ is compatible
iff $\phi$ is  a tautology.  
If
$\{\phi,\neg\phi\}$ is compatible then  
$\{\phi,\neg\phi\}\models_\Diamond \psi $ iff $\psi$ is
a tautology.
\end{satz}

The disjunction connective
$\sqcup$  has no dual conjunction
$\sqcap$.  For,
$
 \neg\phi\sqcup\neg\psi\equiv_\Diamond\neg(\phi\sqcup\psi).
$
The  connective
$\wedge$  has the following
consistency tolerance and nonmonotonicity  properties,
which disappear when $\wedge$ is thought of
as conjunction in Post logic:

\begin{satz}
For every formula  $\phi$, the pair 
$\{\phi,\phi\wedge \neg\phi\}$ is incompatible
iff $\hat\phi(v)=1$ for some valuation $v$. 
In general,  the set of
consequences of  $\alpha\wedge \beta$ 
is not larger than the set of consequences of  $\alpha.$
\end{satz}

Among the derived connectives of 
RM-logic, the
 ``possibility''
connective  $\nabla$ transforms
  $\phi$ into the ``remotest 
 possibility $\nabla\phi$ (from the origin)'', where
$\nabla\phi \negthickspace\,\,\,\equiv_\Diamond
\nabla\nabla\phi \negthickspace\,\,\,\equiv_\Diamond\partial(\phi,0)$.
We also have the ``dual (nearest) possibility''
$\Delta\phi$,  defined as $\partial(\phi,\partial({1}/{2},0))$,
and satisfying
$\Delta\phi
\negthickspace\,\,\,\equiv_\Diamond
\Delta\Delta\phi\negthickspace\,\,\,\equiv_\Diamond
\neg\nabla\neg\phi.$

\medskip
Concerning implication in RM-logic, 
 the  equivalences  
$$\hat\alpha\subseteq\hat\beta
\,\,\Leftrightarrow\,\, \alpha\models_\Diamond\beta
\,\,\Leftrightarrow\,\,  \models_\Diamond
(\beta\sqcup \nabla\neg \alpha)\wedge
(\beta\sqcup \neg\nabla \alpha)\wedge
\neg(\beta\sqcup \nabla\neg \alpha)\wedge
\neg(\beta\sqcup \neg\nabla \alpha)
$$
of Propositions
  \ref{inclusion-finite}(ii)
  and
   \ref{proposition:consequence-to-tautology} 
naturally introduce a  connective
$\rightsquigarrow$ of the form
$$
\alpha\rightsquigarrow\beta=
(\beta\sqcup \nabla\neg \alpha)\wedge
(\beta\sqcup \neg\nabla \alpha)\wedge
\neg(\beta\sqcup \nabla\neg \alpha)\wedge
\neg(\beta\sqcup \neg\nabla \alpha).
$$

\begin{satz}
The $\rightsquigarrow$ connective
 satisfies:
\begin{equation}
\label{equation:MP}
\mbox{If\,\,\,} \models_\Diamond\alpha
\mbox{\,\,\,and\,\,\,}
  \models_\Diamond\alpha\rightsquigarrow\beta
  \mbox{\,\,\,then\,\,\,}
  \models_\Diamond\beta.
\end{equation}
\end{satz}


%

\begin{dixmier}  
{\sc Problems.} 

\smallskip
\begin{enumerate}  
\item  Analyze the negation connective $\neg$ in RM-logic, as well the
completeness and consistency properties of RM-logic
in the general framework of \cite{aaz, avr-neg, avr-con}.  

\smallskip

%
%
%

\item Develop the proof theory of $\models_\Diamond$
(along the lines of \cite[\S 5]{avr-bey}). 

\smallskip

\item Construct first-order RM-logic. Does first-order
 RM-logic have
a nondeterministic semantics as in \cite{avrzam}?

%
%
%
%
%
%
\end{enumerate}
\end{dixmier}


{\small

}

 \end{document}